\newtheorem{theorem}{Theorem}[section]
\newtheorem{lemma}[theorem]{Lemma}
\newtheorem{proposition}[theorem]{Proposition}
\theoremstyle{definition}
\newtheorem{definition}[theorem]{Definition}
\newtheorem{remark}[theorem]{Remark}
\newcommand{\T}{\mathbb{T}}
\newcommand{\R}{\mathbb{R}}
\newcommand{\Z}{\mathbb{Z}}
\begin{document}

\title[Shorttime bilinear Strichartz estimates] %Use the shortened version of the full title
      {On shorttime bilinear Strichartz estimates and applications to improve the energy method}

\author[R. Schippa]{Robert Schippa}
\address{Fakult\"at f\"ur Mathematik, Universit\"at Bielefeld, Postfach 10 01 31, 33501 Bielefeld, Germany}
 \keywords{dispersive equations, bilinear Strichartz estimates, energy method, shorttime $X^{s,b}$-spaces}
\email{robert.schippa@uni-bielefeld.de}

\thanks{Financial support by the German Science Foundation (IRTG 2235) is gratefully acknowledged.}
\maketitle

\bigskip

\begin{abstract}
A refinement of the Bona-Smith method is introduced for dispersive PDE with derivative nonlinearity posed on tori. Key ingredient is a shorttime bilinear Strichartz estimate, which is used in a known combination of perturbative and energy arguments.
\end{abstract}

\section{Introduction}
We consider classical models with derivative nonlinearity like the $k$-generalized Benjamin-Ono equation giving rise to periodic solutions
\begin{equation}
\label{eq:BenjaminOnoEquation}
\left\{ \begin{array}{cl}
\partial_t u + \mathcal{H} \partial_{xx} u &= u^{k-1} \partial_x u \quad (t,x) \in \mathbb{R} \times \mathbb{T} \\
u(0) &= u_0 \in H^s(\mathbb{T}) \end{array} \right.
\end{equation}
or the Zakharov-Kuznetsov equation
\begin{equation}
\label{eq:ZakharovKuznetsovEquation}
\left\{ \begin{array}{cl}
\partial_t u + \partial_{xxx} u + 3 \partial_x \partial_{yy} u &= u \partial_x u \quad (t,x) \in \mathbb{R} \times \mathbb{T}^2 \\
u(0) &= u_0 \in H^s(\mathbb{T}^2) \end{array} \right.
\end{equation}
In \eqref{eq:BenjaminOnoEquation} $\mathcal{H}$ denotes the Hilbert transform, i.e.,
\begin{equation*}
\mathcal{H}: L^2(\mathbb{T}) \rightarrow L^2(\mathbb{T}), \quad (\mathcal{H} f) \widehat (\xi) = - i sgn(\xi) \hat{f}(\xi)
\end{equation*}
In \eqref{eq:ZakharovKuznetsovEquation} the constants in front of the spatial derivatives are relevant for the argument of the proof. Fixing the constants is equivalent to prescribing a ratio of the period lengths.\\
In case $k=2$ \eqref{eq:BenjaminOnoEquation} becomes the Benjamin-Ono equation.
Note that in both cases the mean value is a conserved quantity and real initial values give rise to real-valued solutions. Hence, we confine ourselves in the following to real-valued initial data with vanishing mean. The relevant subspace of the Sobolev space $H^s$ will be denoted by $H_{\mathbb{R}}^s$. For expositional purposes we confine ourselves for most of the time to quadratic nonlinearities, i.e., $k=2$. Later, we shall see how the results generalize to higher order nonlinearities for both models.\\
In both cases the derivative loss prevents one from concluding well-posedness by perturbative methods (cf. \cite{KochTzvetkov2005,MolinetSautTzvetkov2001,Herr2008}).\\
Computing the change of the $L^2$-norm of the $N$ frequencies of solutions to \eqref{eq:BenjaminOnoEquation} for $k=2$ or \eqref{eq:ZakharovKuznetsovEquation} we find
\begin{equation*}
\Vert P_N u(t) \Vert^2_{L^2} = \Vert P_N u(0) \Vert_{L^2}^2 + 2 \int_0^t ds \int_{\mathbb{T}} dx P_N u P_N(u \partial_x u)
\end{equation*}
and the most problematic $High \times Low \rightarrow High$-interaction $P_N(u P_{<N} u)$ can be handled with integration by parts and a commutator estimate. One can conclude the argument with Sobolev embedding leading to well-posedness for $s \geq 1 + d/2$, where $d$ denotes the spatial dimensions. This is one of the main arguments of the classical Bona-Smith method (cf. \cite{BonaSmith1975}).\\
However, at none of the steps the dispersive properties of the propagator are taken into account.\\
Of the above equations the Benjamin-Ono equation is best understood. The first well-posedness result below $s=3/2$ on the real line was reached by Koch-Tzvetkov in \cite{KochTzvetkov2003} using improved Strichartz estimates considering small frequency-dependent time intervals.\\
In this work we will use shorttime bilinear Strichartz estimates.\\
A breakthrough in the well-posedness theory of the Benjamin-Ono equation was the proof of global well-posedness in $H^1(\mathbb{R})$ by Tao in \cite{Tao2004}, where a gauge transform mimicking the Cole-Hopf transform was used weakening the derivative loss enough to treat the equation with Strichartz estimates.\\
This strategy was further refined and Ionescu-Kenig \cite{IonescuKenig2007} proved global well-posedness in $L^2(\mathbb{R})$, which is the best result so far although the scaling critical regularity is $s_c = -1/2$.\\
Adapting the gauge transform to the periodic setting global well-posedness was proved in $L^2(\mathbb{T})$ by Molinet in \cite{Molinet2008}. The proofs of global well-posedness in $L^2$ were simplified in \cite{MolinetPilod2012,IfrimTataru2017}.\\
However, all these results heavily draw from the gauge transform which is very accessible for the Benjamin-Ono equation. For the Zakharov-Kuznetsov equation it is currently unknown whether there is a suitable gauge transform. Using shorttime linear Strichartz estimates, very recently local well-posedness of \eqref{eq:ZakharovKuznetsovEquation} was proved for $s>5/3$ in \cite{LinaresPantheeRobertTzvetkov2018} and in fact, for any period lengths. On $\mathbb{R}^2$ the Zakharov-Kuznetsov equation is known to be semilinear and locally well-posed for $s>1/2$ (cf. \cite{MolinetPilod2015,GruenrockHerr2014}). Making use of shorttime bilinear estimates we modestly improve the local well-posedness of \eqref{eq:ZakharovKuznetsovEquation} to $s>3/2$. On $\mathbb{R}^2$ the Zakharov-Kuznetsov equation is equivalent to
\begin{equation}
\label{eq:SymmetrizedZakharovKuznetsovEquation}
\left\{ \begin{array}{cl}
\partial_t u + \partial_{xxx} u + \partial_{yyy} u &= u (\partial_x + \partial_y) u, \quad (t,x) \in \R \times \R^2\\
u(0) &= u_0 \in H^s(\mathbb{R}^2) \end{array} \right.
\end{equation}
This is no longer the case on $\mathbb{T}^2$. However, the method of proof yields that \eqref{eq:SymmetrizedZakharovKuznetsovEquation} is also locally well-posed on $\lambda_1 \T \times \lambda_2 \T$ provided that $s>3/2$.\\
The improvement of the energy method by Molinet-Vento (cf. \cite{MolinetVento2015}) yields well-posedness of the Benjamin-Ono equation in $H^{1/2}(\mathbb{K})$, $\mathbb{K} \in \{ \mathbb{R}, \mathbb{T} \}$. The improvement hinges on the analysis of the resonance function, which is less clear for the generalized Benjamin-Ono equation or the Zakharov-Kuznetsov equation. The method we will describe in the present work is more flexible, in the sense that the nonlinear and energy estimates rely mostly on shorttime bilinear Strichartz estimates, so that it can also be used for equations which are not amenable to gauge transforms or have a more involved resonance.
% First, one proves a priori estimates for solutions through an integration by parts exploiting the real-valuedness of solutions and the special structure of the nonlinearity. In the second step one proves continuous dependence in the $L^2$-topology in terms of higher regularity of the initial data. Finally, one proves that the solutions to the original initial data are well approximated by the solutions coming from frequency-truncated initial data. At none of the above steps the dispersive properties of the propagator are taken into account. This is supposed to be improved in the current article.
  Moreover, this approach can also be applied to solutions defined on the Euclidean space, and likely even on cylinders. We choose to illustrate the method on tori because on compact manifolds the dispersive properties are not as clear as in Euclidean space, where the dispersive estimate
\begin{equation}
\label{eq:dispersiveEstimate}
\Vert e^{it \Delta} u_0 \Vert_{L^\infty(\mathbb{R}^n)} \lesssim_n |t|^{-n/2} \Vert u_0 \Vert_{L^1(\mathbb{R}^n)}
\end{equation}
holds globally in time. On compact manifolds this would contradict the conservation of mass.\\
In now already classical works it was demonstrated how localizing time to small frequency dependent time intervals recovers the dispersive properties from Euclidean space (cf. \cite{StaffilaniTataru2002, BurqGerardTzvetkov2004}).\\
To be more precise, consider a smooth dispersion relation $\varphi: \mathbb{R}^n \rightarrow \mathbb{R}$. The underlying heuristic is that a frequency localized solution with frequencies around $\xi$ to the linear dispersive PDE
\begin{equation}
i \partial_t u + \varphi(\nabla/i) u = 0
\end{equation}
travels with a group velocity $|\nabla \varphi(\xi)|$ as can be seen from writing the linear propagator in Fourier space.\\
For the refinement of the energy method we plan to use bilinear Strichartz estimates: In case of Schr\"odinger's equation we have $\varphi(\xi) = -\xi^2$ that means Schr\"odinger waves even on compact manifolds do not leave one chart in a time interval of size $N^{-1}$. This time localization we refer to as Euclidean window. This nomenclature was previously used in a different context in \cite{IonescuPausader2012}. This is the time interval in which linear Strichartz estimates can be recovered (cf. \cite{BurqGerardTzvetkov2004}).
To quantify the speed of propagation we say that $\varphi$ is of order $\alpha>1$ when
\begin{equation}
\label{eq:alphaGroupVelocity}
|\nabla \varphi(\xi)| \sim N^{\alpha-1} \quad N \in 2^{\mathbb{N}_0}
\end{equation}
 Multilinear estimates can be improved making additional use of transversality. The following result in Euclidean space is well-known:
\begin{proposition}
Let $U_i$ be open sets in $\mathbb{R}^n$, $\varphi_i \in C^1(U_i,\mathbb{R})$ and let $u_i$ have Fourier support in balls of radius $r$ which are contained in $U_i$ for $i=1,2$. Moreover, suppose that $|\nabla \varphi(\xi_1) - \nabla \varphi(\xi_2)| \geq N > 0 $, whenever $\xi_1 \in U_1$, $\xi_2 \in U_2$.\\
Then, we find the following estimate to hold:
\begin{equation}
\label{eq:EuclideanTransversality}
\Vert e^{it \varphi(\nabla/i)} u_1 e^{it \varphi(\nabla/i)} u_2 \Vert_{L^2_{t,x}(\mathbb{R} \times \mathbb{R}^n)} \lesssim_n \frac{r^{\frac{n-1}{2}}}{N^{1/2}}  \Vert u_1 \Vert_{L^2(\mathbb{R}^n)} \Vert u_2 \Vert_{L^2(\mathbb{R}^n)}
\end{equation}
\end{proposition}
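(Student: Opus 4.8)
The plan is to reduce the bilinear estimate to an $L^2$-based Fourier-analytic computation and exploit transversality through a change of variables. First I would write the product in physical space via the Fourier inversion formula: with $\widehat{u_i}$ supported in a ball $B_i$ of radius $r$ inside $U_i$, the product $e^{it\varphi(\nabla/i)}u_1 \cdot e^{it\varphi(\nabla/i)}u_2$ has spacetime Fourier transform
\begin{equation*}
\mathcal{F}_{t,x}\big( e^{it\varphi(\nabla/i)}u_1\, e^{it\varphi(\nabla/i)}u_2 \big)(\tau,\xi) = \int_{\xi_1 + \xi_2 = \xi} \widehat{u_1}(\xi_1)\widehat{u_2}(\xi_2)\, \delta\big(\tau - \varphi(\xi_1) - \varphi(\xi_2)\big).
\end{equation*}
By Plancherel in $(t,x)$ it suffices to bound the $L^2_{\tau,\xi}$ norm of this expression. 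Fixing $\xi$, the inner object is (the pushforward under $\xi_1 \mapsto \varphi(\xi_1)+\varphi(\xi-\xi_1)$ of) the measure $\widehat{u_1}(\xi_1)\widehat{u_2}(\xi-\xi_1)\,d\xi_1$; by Cauchy--Schwarz in $\xi_1$ its square is controlled by the total variation of that pushforward times $\int |\widehat{u_1}(\xi_1)|^2 |\widehat{u_2}(\xi-\xi_1)|^2\,d\sigma$, where $d\sigma$ is arclength on the level set. After integrating in $\tau$ and $\xi$ one is left with $\|u_1\|_{L^2}^2\|u_2\|_{L^2}^2$ multiplied by a sup over $\xi$ of a Jacobian factor coming from the coarea formula applied to the map $\xi_1 \mapsto \varphi(\xi_1) + \varphi(\xi - \xi_1)$.

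The second step is the transversality gain. The gradient of this map in $\xi_1$ is $\nabla\varphi(\xi_1) - \nabla\varphi(\xi_2)$ with $\xi_2 = \xi - \xi_1$, which by hypothesis has magnitude $\geq N$. Hence the level sets $\{\varphi(\xi_1)+\varphi(\xi_2) = \tau\}$ within $B_1$ are smooth hypersurfaces on which the coarea Jacobian is $\leq N^{-1}$, and each such surface, being contained in the radius-$r$ ball $B_1$, has $(n-1)$-dimensional measure $\lesssim_n r^{n-1}$. Combining: for each fixed $\xi$ the contribution is $\lesssim_n N^{-1} r^{n-1}$ after one more application of Cauchy--Schwarz (this is where the factor $r^{n-1}/N$ appears as the square of the claimed constant). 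Summing over $\xi$ using Fubini and the $L^2$ normalizations of $u_1, u_2$ then yields $\|e^{it\varphi}u_1 e^{it\varphi}u_2\|_{L^2_{t,x}}^2 \lesssim_n (r^{n-1}/N)\|u_1\|_{L^2}^2\|u_2\|_{L^2}^2$, which is the square of \eqref{eq:EuclideanTransversality}.

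The main obstacle is making the coarea/change-of-variables argument clean when $n \geq 2$: one must localize in $\xi_1$ so that the map $\xi_1 \mapsto \varphi(\xi_1)+\varphi(\xi-\xi_1)$ is, on each piece, a genuine submersion onto $\tau$ with controlled fibers. A convenient device is to pick, at each point of $B_1$, the coordinate direction along which $\nabla\varphi(\xi_1)-\nabla\varphi(\xi_2)$ has a component of size $\gtrsim N/\sqrt{n}$, partition $B_1$ into finitely many ($\lesssim_n 1$) regions accordingly, and on each region use that coordinate as the "$\tau$-variable" and the complementary $n-1$ coordinates (ranging over a set of measure $\lesssim_n r^{n-1}$) to parametrize the fiber. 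The $C^1$ hypothesis on $\varphi_i$ is exactly enough for this; no curvature is needed, only the lower bound on the gradient difference. A second, more technical point is justifying the formal $\delta$-calculus above — this is handled either by the standard duality formulation (test against $f \in L^2_{t,x}$, expand, and estimate the resulting integral over $\{\tau = \varphi(\xi_1)+\varphi(\xi_2)\}$) or by mollifying the measure and passing to the limit; neither introduces real difficulty.
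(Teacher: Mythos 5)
Your overall strategy is the standard one and is exactly what the paper alludes to with ``this follows from a change of variables'': Plancherel in $(t,x)$, Cauchy--Schwarz along the convolution fiber $\{\xi_1+\xi_2=\xi,\ \varphi(\xi_1)+\varphi(\xi_2)=\tau\}$, the factor $N^{-1}$ from the Jacobian $\nabla\varphi(\xi_1)-\nabla\varphi(\xi_2)$, and the factor $r^{n-1}$ from the size of the supports. In one dimension this closes cleanly: the domain of $\xi_1$ is an interval, $g(\xi_1)=\varphi(\xi_1)+\varphi(\xi-\xi_1)$ has $|g'|\ge N$ with $g'$ continuous, hence $g$ is monotone and the fiber is a single point of weight $\le N^{-1}$.

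For $n\ge 2$, however, the crucial step is not justified as you state it. First, the unweighted claim that a level set of $\psi_\xi(\xi_1)=\varphi(\xi_1)+\varphi(\xi-\xi_1)$ inside a ball of radius $r$ has $\mathcal{H}^{n-1}$-measure $\lesssim_n r^{n-1}$ is false for general $C^1$ functions with nonvanishing gradient (level sets can oscillate, e.g.\ graphs of $\xi_{12}=\sin(M\xi_{11})/M'$, and have arbitrarily large area); what you actually need is the weighted bound $\int_{\{\psi_\xi=\tau\}\cap B_1} |\nabla\psi_\xi|^{-1}\,d\mathcal{H}^{n-1}\lesssim_n r^{n-1}/N$, i.e.\ an $L^\infty$ bound on the pushforward density. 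Your proposed device --- partition $B_1$ into $O_n(1)$ regions $R_j$ on which $|\partial_j\psi_\xi|\gtrsim N/\sqrt{n}$ and parametrize the fiber in $R_j$ by the complementary $n-1$ coordinates --- does give the right bound \emph{per sheet} (on a graph piece the weight exactly cancels the area growth, leaving $\sqrt{n}\,r^{n-1}/N$), but it silently assumes the projection of the fiber onto the complementary hyperplane has multiplicity $O_n(1)$. Nothing in the hypotheses forces this: within a line parallel to $e_j$, consecutive roots of $\psi_\xi=\tau$ lying in $R_j$ may be separated by excursions into other regions where $\partial_j\psi_\xi$ changes sign, so the number of sheets over a base point is not controlled by $C^1$ regularity and the lower bound $|\nabla\varphi(\xi_1)-\nabla\varphi(\xi_2)|\ge N$ alone, and the constant cannot then depend only on $n$. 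To close the argument one needs an extra ingredient that restores a genuine monotone one-dimensional change of variables: for instance smallness of $r$ relative to the modulus of continuity of $\nabla\varphi_i$ (so that the direction of $\nabla\varphi(\xi_1)-\nabla\varphi(\xi_2)$ is essentially constant on the supports and the fiber is a single Lipschitz graph), a $C^{1,1}$/convexity hypothesis of the form $r\,\Vert D^2\varphi\Vert_\infty\lesssim N$, or structural assumptions on $\varphi$. This is precisely the role of the sum-type hypothesis in Proposition \ref{prop:shorttimeBilinearStrichartzEstimate}: there one coordinate of the gradient difference has a fixed size and sign, so the one-dimensional argument applies in that coordinate and the remaining $n-1$ summations give $K^{n-1}$. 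As written, your proof has a genuine gap at this multiplicity point, even though the architecture is the intended one.
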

In Euclidean space this follows from a change of variables. A consequence of this transversality are bilinear Strichartz estimates for Schr\"odinger waves localized at frequencies $N_1 \gg N_2$ (cf. \cite{Bourgain1998}):
\begin{equation}
\label{eq:bilinearStrichartzEstimateSEQ}
\Vert e^{it \Delta} P_{N_1} u_1 e^{it \Delta} P_{N_2} u_2 \Vert_{L^2_{t,x}(\mathbb{R} \times \mathbb{R}^n)} \lesssim \frac{N_2^{\frac{n-1}{2}}}{N_1^{1/2}} \Vert P_{N_1} u_1 \Vert_{L^2} \Vert P_{N_2} u_2 \Vert_{L^2}
\end{equation}
For the Schr\"odinger equation this result was proved in Euclidean windows by Hani (cf. \cite{Hani2012}) on general compact manifolds. On $\mathbb{T}$ the estimate, which is to be expected working in Euclidean windows, was proven in \cite{MoyuaVega2008}. We prove a generalization of this estimate on higher dimensional tori in Euclidean windows.
However, even for solutions on the Euclidean space this estimate is not sufficient to overcome the derivative loss in case of the Benjamin-Ono equation, so that the equation can be treated as seminlinear. Using Duhamel's formula we write
\begin{equation}
u(t) = e^{t \mathcal{H} \partial_{xx}} u_0 + \int_0^t e^{(t-s) \mathcal{H} \partial_{xx} } (u \partial_x u)(s) ds 
\end{equation}
Now consider the $High \times Low \rightarrow High$-interaction in which case the derivative loss must be completely recovered in order to close the argument:
\begin{equation}
\begin{split}
&\Vert \int_0^T e^{(t-s) \mathcal{H} \partial_{xx}} \partial_x \tilde{P}_N (P_N u P_K u) ds \Vert_{L_x^2} \\
&\lesssim \Vert \partial_x (P_N u P_K u) \Vert_{L_{T}^1 L_x^2} \lesssim |T|^{1/2} N \Vert P_N u P_K u \Vert_{L_T^2 L_x^2} \\
&\lesssim |T|^{1/2} N^{1/2} \Vert P_N u(0) \Vert_{L^2} \Vert P_K u(0) \Vert_{L^2}
\end{split}
\end{equation}
and in order to compensate the derivative loss one has to choose $T=T(N) = N^{-1}$. This observation was made precise in \cite{GuoPengWangWang2011}, where global well-posedness was proven in $H^1(\mathbb{R})$ without gauge transform. One key observation is that at this localization in time the bilinear Strichartz estimate also holds on the torus. To exploit this, we utilize function spaces incorporating the frequency dependent localization in time in order to prove the following theorems:
\begin{theorem}
\label{thm:wellPosednessBenjaminOnoEquation}
Let $k \in \mathbb{Z}_{\geq 2}$ and $s>1$.
\begin{enumerate}
\item[(a)] There exists $\varepsilon = \varepsilon(k,s)$ so that for any solution $u$ to \eqref{eq:BenjaminOnoEquation} we find the following estimate to hold
\begin{equation}
\label{eq:APrioriEstimateSmoothSolutions}
\sup_{t \in [0,1]} \Vert u(t) \Vert_{H_{\mathbb{R}}^s} \lesssim \Vert u_0 \Vert_{H_{\mathbb{R}}^s}
\end{equation}
provided that $u(0)$ is a smooth real-valued initial datum with vanishing mean and $\Vert u(0) \Vert_{H_{\mathbb{R}}^s} \leq \varepsilon$.
\item[(b)] There exists $\varepsilon = \varepsilon(k,s)$ so that for solutions $u_1, u_2$ we find the following estimate to hold:
\begin{equation}
\sup_{t \in [0,1]} \Vert u_1(t) - u_2(t) \Vert_{L^2} \lesssim_{\Vert u_i(0) \Vert_{H^s}} \Vert u_1(0) - u_2(0) \Vert_{L^2}
\end{equation}
provided that $u_i(0)$ is a smooth real-valued initial datum with vanishing mean and $\Vert u_i(0) \Vert_{H^s} \leq \varepsilon(k,s)$ for $i=1,2$.
\item[(c)] For any $T>0$ there is an $\varepsilon > 0$ so that the data-to-solution mapping $S_T^\infty: H_{\mathbb{R}}^\infty \rightarrow C([0,T],H_{\mathbb{R}}^\infty)$ admits a unique continuous extension $S_T^s: B_\varepsilon \rightarrow C([0,T],H_{\mathbb{R}}^s)$ where $B_\varepsilon$ denotes the $\varepsilon$ ball around the origin in $H_{\mathbb{R}}^s$.
\end{enumerate}
\end{theorem}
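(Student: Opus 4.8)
The plan is to set up a shorttime $X^{s,b}$-type functional framework adapted to the Euclidean window $|T| \sim N^{-1}$, which is the natural time localization here because the Benjamin-Ono dispersion $\varphi(\xi) = -\mathrm{sgn}(\xi)\xi^2$ is of order $\alpha = 2$ in the sense of \eqref{eq:alphaGroupVelocity}, so that a frequency-$N$ wave travels with group velocity $\sim N$; on time intervals of this length the bilinear Strichartz estimate on $\T$ matches its Euclidean counterpart \eqref{eq:bilinearStrichartzEstimateSEQ}. I would introduce dyadically localized spaces $F_N$ built from $X^{1/2,1}$ on time slices of length $N^{-1}$, their dual/nonlinearity spaces $\mathcal{N}_N$, and an energy space, and assemble $F^s$, $\mathcal{N}^s$, $E^s$ by the usual $\ell^2$ summation with weight $N^s$ over dyadic $N$. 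The linear estimate $\Vert u \Vert_{F^s(T)} \lesssim \Vert u \Vert_{E^s(T)} + \Vert (\partial_t + \mathcal{H} \partial_{xx}) u \Vert_{\mathcal{N}^s(T)}$ is then essentially bookkeeping once the spaces are in place.

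The analytic core consists of two multilinear estimates. The first is the shorttime nonlinear estimate $\Vert \partial_x (u_1 \cdots u_k) \Vert_{\mathcal{N}^s(T)} \lesssim \prod_{i=1}^k \Vert u_i \Vert_{F^s(T)}$, using that the nonlinearity equals $\tfrac{1}{k} \partial_x(u^k)$. After dyadic decomposition the decisive configuration is $\mathrm{High} \times \mathrm{Low} \times \cdots \times \mathrm{Low} \to \mathrm{High}$, where the derivative costing the top frequency $N_{\max}$ must be recovered: on the time interval of length $N_{\max}^{-1}$, Cauchy-Schwarz in time contributes $N_{\max}^{-1/2}$, and pairing the two highest frequencies through the shorttime bilinear Strichartz estimate contributes another $N_{\max}^{-1/2}$, so that $N_{\max} \cdot N_{\max}^{-1/2} \cdot N_{\max}^{-1/2} = 1$ and the derivative loss is defeated; the remaining low-frequency factors are placed in $L^\infty_{t,x}$ and, together with the dyadic summation, cost the regularity that pushes the threshold to $s > 1$. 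Resonant and $\mathrm{High} \times \mathrm{High} \to \mathrm{Low}$ interactions are more favourable and handled by the same tools. The second is the energy estimate $\Vert u \Vert_{E^s(T)}^2 \lesssim \Vert u(0) \Vert_{H^s}^2 + \Vert u \Vert_{F^s(T)}^{k+1}$ (in a frequency-envelope formulation if needed): one differentiates $\Vert P_N u(t) \Vert_{L^2}^2$ and the $H^s$-component in time, substitutes the equation, and exploits the divergence structure to integrate by parts, turning the worst $\mathrm{High} \times \mathrm{Low} \to \mathrm{High}$ contribution into a commutator $[P_N, u^{k-1}_{\mathrm{low}}] \partial_x P_N u$ which gains a derivative; its contribution is again estimated via the shorttime bilinear Strichartz estimate. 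I expect this symmetrization/commutator step -- making the bookkeeping interface cleanly with the $F^s$ and $\mathcal{N}^s$ norms and leaving no uncompensated derivative -- to be the main obstacle.

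Part (a) then follows from a continuity argument: writing $X(T) = \Vert u \Vert_{F^s(T)}$ the three estimates combine to $X(T)^2 \lesssim \Vert u_0 \Vert_{H^s}^2 + X(T)^{k+1} + X(T)^{2k}$, with $X(0) \lesssim \Vert u_0 \Vert_{H^s}$ and $T \mapsto X(T)$ continuous, so for $\Vert u_0 \Vert_{H^s} \le \varepsilon$ small this forces $X(T) \lesssim \Vert u_0 \Vert_{H^s}$ on $[0,1]$, and the energy estimate upgrades this to \eqref{eq:APrioriEstimateSmoothSolutions}. For (b) I would run the same machinery on $v = u_1 - u_2$ at $L^2$ regularity: it solves an equation linear in $v$ with coefficients built from $u_1, u_2$, the relevant multilinear estimates are the ones above with one factor equal to $v$ and the others controlled in $F^s$ by (a), and the strict inequality $s > 1$ provides exactly the room needed to absorb the derivative in the difference nonlinearity (where integration by parts is no longer freely available and the bilinear gain is essential); one may have to propagate an intermediate norm $H^\sigma$, $0 \le \sigma \le s$, for $v$ by the identical scheme. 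Finally (c) is the Bona-Smith argument: approximate $u_0 \in B_\varepsilon$ by smooth mean-zero data $u_{0,n} \to u_0$ in $H^s$; the classical global solutions $u_n$ obey the uniform bound (a), hence by (b) are Cauchy in $C([0,1], L^2)$ and thus converge in $C([0,1], H^\sigma)$ for all $\sigma < s$, and the uniform $H^s$-bound together with the standard Bona-Smith continuity lemma upgrades convergence to $C([0,1], H^s)$; the limit solves the equation, uniqueness and continuous dependence come from (b), and iterating the unit-interval estimate yields $[0,T]$ for any $T$ with $\varepsilon = \varepsilon(T)$.
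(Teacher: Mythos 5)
Your overall architecture (shorttime spaces on Euclidean windows $|I|\sim N^{-1}$, linear estimate, shorttime nonlinear estimate via the bilinear Strichartz bound, energy estimate with integration by parts and a commutator, bootstrap for (a), difference equation at $L^2$ for (b), Bona--Smith for (c)) matches the paper; the only cosmetic deviation is that you build the spaces from shorttime $X^{s,b}$-type norms whereas the paper uses $U^2/V^2$-based spaces, which changes nothing essential. However, part (c) as you describe it has a genuine gap. You claim that the approximating solutions are Cauchy in $C([0,1],L^2)$ by (b), hence converge in $C([0,1],H^\sigma)$ for $\sigma<s$, and that ``the uniform $H^s$-bound together with the standard Bona--Smith continuity lemma upgrades convergence to $C([0,1],H^s)$.'' No such black-box upgrade exists: a sequence can be bounded in $H^s$, converge in every $H^\sigma$ with $\sigma<s$, and still fail to converge in $H^s$ (think of profiles escaping to high frequency with fixed $H^s$ mass). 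The whole point of the Bona--Smith scheme in this quasilinear setting is that one must prove an additional difference energy estimate \emph{at the top regularity} $s$, in which the lost derivative is absorbed by measuring the smoother (frequency-truncated) solution in a doubly-smooth norm; this is the paper's estimate \eqref{eq:EnergyNormPropagationDifferences}, whose right-hand side contains the term $\Vert v \Vert_{F^0(T)} \Vert v \Vert_{F^s(T)} \Vert u_2 \Vert_{F^{2s}(T)}$, and it is not a consequence of (a) and (b).

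Concretely, the missing chain is: compare $S^\infty_T(u_0)$ with $S^\infty_T(P_{\leq N}u_0)$, prove the a priori bound $\Vert S^\infty_T(P_{\leq N}u_0)\Vert_{F^{2s}(T)} \lesssim \Vert P_{\leq N}u_0\Vert_{H^{2s}} \lesssim N^{s}\Vert u_0\Vert_{H^s}$ (which requires rerunning the a priori estimates at regularity $2s$ with the $F^s$-smallness as the only smallness input), and combine it with the $L^2$-level bound $\Vert v\Vert_{F^0(T)} \lesssim \Vert P_{>N}u_0\Vert_{L^2} \lesssim N^{-s}\Vert P_{>N}u_0\Vert_{H^s}$ coming from \eqref{eq:L2PropagationDifferences}, so that in \eqref{eq:EnergyNormPropagationDifferences} the dangerous product $\Vert v\Vert_{F^0}\Vert u_2\Vert_{F^{2s}}$ is $O(\Vert P_{>N}u_0\Vert_{H^s})$, i.e.\ small as $N\to\infty$ uniformly along the approximating sequence. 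Only then does the triangle-inequality argument for $S^\infty_T(u_n)-S^\infty_T(u_m)$ close in $C([0,T],H^s)$. You need to identify and prove this $H^s$-level difference estimate (with the asymmetric $F^0$--$F^s$--$F^{2s}$ trilinear structure, where integration by parts is unavailable and the shorttime bilinear estimate must carry the full derivative), and the $F^{2s}$ persistence bound for the truncated solutions; without them your step from $H^\sigma$-convergence to $H^s$-convergence does not go through. The rest of your outline (parts (a) and (b), and the bootstrap) is consistent with the paper's proof.
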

This result recovers the results from \cite{MolinetRibaud2009} on generalized Benjamin-Ono equations on the circle up to $s=1$ for small initial data. In \cite{MolinetRibaud2009} the well-posedness result was established by means of a gauge transform recasting the derivative nonlinearity into a milder form. From the proof it will be clear that we can also treat linear combinations of derivative nonlinearities $\sum_{k \geq 2}^K a_k u^{k-1} \partial_x u$ which is not possible when one uses a gauge transform because the gauge transform changes with the order of nonlinearity. In case of mixed orders we have to consider additional smallness conditions on the initial data. Moreover, without using a gauge transform it is straightforward to analyze viscous limits (cf. \cite{Molinet2013, GuoPengWangWang2011}). The requirement $s>1$ only stems from a logarithmic loss of carrying out square sums. Switching to $\ell^1$-Besov refinements we can also recover $s=1$.\\
For the two-dimensional Zakharov-Kuznetsov equation we can prove the following theorem by the same means:
\begin{theorem}
\label{thm:wellPosednessZakharovKuznetsov}
Let $T > 0$ and $s > 3/2$. There exists $\varepsilon = \varepsilon(T,s)$ so that the mapping $S_T^\infty$ assigning smooth real-valued intial data with vanishing mean to smooth solutions to \eqref{eq:ZakharovKuznetsovEquation} has a unique continuous extension $S_T^s: B_\varepsilon \rightarrow C([0,T],H_{\mathbb{R}}^s)$.
\end{theorem}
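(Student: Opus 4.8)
The plan is to run the shorttime $X^{s,b}$ / Bona-Smith machinery already employed for Theorem~\ref{thm:wellPosednessBenjaminOnoEquation} on \eqref{eq:ZakharovKuznetsovEquation}, the only genuinely new analytic input being the periodic shorttime bilinear Strichartz estimate on $\T^2$ (established earlier in the paper as a consequence of the transversality estimate \eqref{eq:EuclideanTransversality} after passing to a Euclidean window) together with a careful energy estimate at the threshold $s>3/2$. Write $L=\partial_{xxx}+3\partial_x\partial_{yy}$, so that \eqref{eq:ZakharovKuznetsovEquation} reads $\partial_t u + Lu = u\partial_x u$ with dispersion relation $\varphi(\xi,\eta)=\xi^3+3\xi\eta^2$, which is of order $3$ in the sense of \eqref{eq:alphaGroupVelocity}. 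I would introduce the shorttime spaces $F^s$, the companion space $N^s$ for the nonlinearity, and the energy space $E^s$, all built on a frequency-dependent time localization to intervals of length comparable to the Euclidean window at frequency $N$, exactly as for Benjamin-Ono. By the classical energy method smooth mean-zero data admit unique global smooth solutions, so $S_T^\infty$ is well defined; the theorem then follows from (A) an a priori bound $\sup_{[0,1]}\|u(t)\|_{H^s_{\mathbb R}}\lesssim\|u_0\|_{H^s_{\mathbb R}}$ for smooth data with $\|u_0\|_{H^s_{\mathbb R}}\le\varepsilon$, (B) a Lipschitz bound for differences of two such solutions measured in the weaker norm $L^2$ (or $H^\sigma$ for $\sigma$ slightly below $3/2$), and (C) the Bona-Smith argument: mollify the data, use (A), (B) and interpolation to show that the smooth solutions form a Cauchy sequence in $C([0,T],H^s_{\mathbb R})$, which yields the continuous extension $S_T^s$ and its uniqueness. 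For arbitrary $T$ one iterates the unit-time a priori bound over $[0,1],[1,2],\dots$, choosing $\varepsilon=\varepsilon(T,s)$ small enough that the solution never leaves the small regime; this is where the $T$-dependence of $\varepsilon$ enters.

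The core of (A)--(C) consists of three estimates. First, a linear estimate $\|u\|_{F^s}\lesssim\|u\|_{E^s}+\|\partial_t u + Lu\|_{N^s}$, proved by splitting the unit time interval into the frequency-dependent subintervals and using the definitions of the spaces together with a $U^p/V^p$ (or classical $X^{s,b}$) bound on each piece. Second, the nonlinear estimate $\|u\partial_x u\|_{N^s}\lesssim\|u\|_{F^s}^2$: after Littlewood-Paley decomposition this reduces to dyadic interactions, and the relevant $\mathrm{High}\times\mathrm{High}\to\mathrm{Low}$ and $\mathrm{High}\times\mathrm{Low}\to\mathrm{High}$ pieces are each dominated by the dyadic quantity $\|P_{N_1}e^{tL}v_1\,P_{N_2}e^{tL}v_2\|_{L^2_{t,x}(I\times\T^2)}$ on a Euclidean window $I$, i.e. by the periodic shorttime bilinear Strichartz estimate proved above. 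Third, the energy estimate $\|u\|_{E^s}^2\lesssim\|u_0\|_{H^s_{\mathbb R}}^2+\|u\|_{F^s}^3$: one differentiates $\|P_N u(t)\|_{L^2}^2$ in $t$, inserts the equation, and estimates $\int_{\T^2} P_N u\,P_N(u\partial_x u)$; the dangerous $\mathrm{High}\times\mathrm{Low}\to\mathrm{High}$ contribution is integrated by parts (moving $\partial_x$ off the high factor), which cancels the diagonal term by the mean-zero condition up to the commutator $[P_N,u_{\mathrm{low}}]\partial_x$, and it is precisely here that, instead of putting $\partial_x u_{\mathrm{low}}$ into $L^\infty$ (which would force $s>1+d/2=2$), one integrates over the Euclidean window and invokes the bilinear Strichartz estimate. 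Combining the three estimates with a continuity/bootstrap argument in an auxiliary parameter (a time truncation or a regularization index), smallness of $\|u_0\|_{H^s_{\mathbb R}}$ closes (A), and the same scheme at regularity $L^2$ yields (B), where the absence of a conservation-law symmetry in the difference equation $\partial_t w + Lw = \tfrac12\partial_x\big(w(u_1+u_2)\big)$ is harmless because the only borderline interaction has low output frequency.

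The main obstacle is the energy estimate at $s>3/2$. On $\R^2$ the symmetrized equation \eqref{eq:SymmetrizedZakharovKuznetsovEquation} is semilinear, but on $\T^2$ one must absorb a full $\partial_x$-derivative using only the gain available from the periodic bilinear Strichartz estimate inside the Euclidean window, and the commutator term from the $\mathrm{High}\times\mathrm{Low}\to\mathrm{High}$ integration by parts sits exactly at that threshold. It closes only because the prescribed ratio of the coefficients in front of $\partial_{xxx}$ and $\partial_x\partial_{yy}$ (equivalently, the fixed period ratio) guarantees the transversality $|\nabla\varphi(\xi_1)-\nabla\varphi(\xi_2)|\gtrsim N_1^2$ that drives the bilinear estimate for separated frequencies, and because the genuinely resonant set — where $\varphi(\xi_1+\xi_2)-\varphi(\xi_1)-\varphi(\xi_2)$ is small — is thin enough to be handled by a crude bound. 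Tracking the losses incurred by summing over dyadic frequency blocks and the logarithmic losses from the square sums, and arranging that the total cost is only $s>3/2$ rather than $s>2$, is the technical heart of the proof; everything else is a routine adaptation of the argument used for Theorem~\ref{thm:wellPosednessBenjaminOnoEquation}.
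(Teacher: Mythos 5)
Your overall scheme is the same as the paper's: shorttime $U^2/V^2$-based spaces $F^s$, $N^s$, $E^s$ with time localization $N^{-2}$, the linear/nonlinear/energy triple of estimates, a bootstrap for the a priori bound, $L^2$-Lipschitz bounds for differences, and a Bona--Smith-type approximation by truncated data, with the unit-interval argument iterated (or rescaled) to reach general $T$. Two points, however, are genuine gaps as written. First, the final step: interpolating the $L^2$ (or $H^\sigma$, $\sigma<s$) contraction against the uniform $H^s$ a priori bounds only yields Cauchy behaviour in $H^\sigma$ for $\sigma<s$, not in $H^s$ itself. The paper closes the endpoint by a second energy estimate for the difference $v=u_1-u_2$ at regularity $s$, namely \eqref{eq:EnergyNormPropagationDifferences}, whose decisive term is $\Vert v \Vert_{F^0}\Vert v\Vert_{F^s}\Vert u_2\Vert_{F^{2s}}$; combined with $\Vert u_2\Vert_{F^{2s}}\lesssim N^{s}\Vert u_0\Vert_{H^s}$ for the solution with data $P_{\leq N}u_0$ and $\Vert v\Vert_{F^0}\lesssim N^{-s}\Vert P_{>N}u_0\Vert_{H^s}$, this is what produces smallness at the level $H^s$. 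Your sketch never states this doubled-regularity difference estimate, and without it the continuity of $S_T^s$ in $H^s$ does not follow.

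Second, your bilinear input is described too narrowly. You invoke only the $N_1\gg N_2$ estimate and claim it is ``a consequence of the transversality estimate \eqref{eq:EuclideanTransversality} after passing to a Euclidean window,'' attributing the role of the coefficient $3$ (equivalently the period ratio) to transversality. In the paper the periodic estimate is \emph{not} deduced from \eqref{eq:EuclideanTransversality}: it is proved directly for sum-type dispersion relations (Proposition \ref{prop:shorttimeBilinearStrichartzEstimate}), and for \eqref{eq:ZakharovKuznetsovEquation} one must first symmetrize in momentum space; the coefficient ratio matters because the symmetrizing map sends $\Z^2$ to a rescaled lattice, so the Fourier-series/Young argument survives the change of variables — transversality alone is not known to suffice on $\T^2$. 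Moreover, both the $High\times High\to High$/$Low$ cases of the nonlinear estimate and the energy estimate require bilinear gains between \emph{comparable} frequencies, which the paper obtains via the interval-slicing and decoupled-coordinate decomposition of Remark \ref{remark:EuclideanTransversalityExtension} (with logarithmic losses); your proposal omits these interactions entirely, and the mention of a resonance-set analysis is beside the point, since the method deliberately avoids the resonance function.
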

The article is structured as follows: In Section \ref{section:ShorttimeBilinearStrichartzEstimates} we prove the bilinear Strichartz estimates in Euclidean windows on tori. In Section \ref{section:FunctionSpaces} we define function spaces which take into account frequency dependent time localization. In Section \ref{section:ShorttimeNonlinearEstimates} we iterate the quadratic nonlinearity in the shorttime function spaces and in Section \ref{section:EnergyEstimates} we perform energy estimates. In Section \ref{section:Conclusion} we conclude the argument to prove Theorems \ref{thm:wellPosednessBenjaminOnoEquation} and \ref{thm:wellPosednessZakharovKuznetsov} for $k=2$. In Section \ref{section:HigherOrderGeneralizations} we indicate how to extend the result to the cases $k \geq 3$.

\section{Shorttime bilinear Strichartz estimates}
\label{section:ShorttimeBilinearStrichartzEstimates}
Purpose of this section is to prove shorttime bilinear Strichartz estimates which resemble the bilinear Strichartz estimates one has due to transversality in Euclidean space. We are not able to recover the full result and it is unclear whether these estimates hold true in the same generality like in Euclidean space. We have to impose a condition allowing us to disentagle the oscillation in different coordinates.

\begin{definition}
We say that a dispersion relation $\varphi: \mathbb{R}^n \rightarrow \mathbb{R}$ is of sum type if $\varphi(\xi) = \sum_{i=1}^n \mu(\xi_i)$ with $\mu$ slowly varying, i.e., $\mu(x) \sim \mu(2x)$ for any $x \neq 0$.
\end{definition}

\begin{proposition}
\label{prop:shorttimeBilinearStrichartzEstimate}
Let $K \ll N$, suppose that $\varphi$ is of sum type and satisfies \eqref{eq:alphaGroupVelocity} for some $\alpha > 1$. Then, we find the following estimate to hold:
\begin{equation}
\label{eq:bilinearStrichartzEstimate}
\begin{split}
&\Vert P_N e^{it \varphi(\nabla/i)} u_1 P_K e^{it \varphi(\nabla/i)} u_2 \Vert_{L_t^2([0,N^{-(\alpha-1)}],L^2(\mathbb{T}^n))} \\
 &\lesssim \frac{K^{\frac{n-1}{2}}}{N^{\frac{\alpha-1}{2}}} \Vert P_N u_1 \Vert_{L^2(\mathbb{T}^n)} \Vert P_K u_2 \Vert_{L^2(\mathbb{T}^n)}
\end{split}
\end{equation}
\end{proposition}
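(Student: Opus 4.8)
The plan is to reduce the periodic bilinear estimate to a counting problem in frequency space, exploiting the sum-type structure of $\varphi$ to work one coordinate at a time. First I would decompose $u_1 = \sum_{j} P_N^{(j)} u_1$ into a bounded (independent of $N$, depending only on $K$) number of pieces, where each $P_N^{(j)} u_1$ has Fourier support in a cube of side length comparable to $K$; since there are at most $O((N/K)^n)$ such cubes but we will sum in $\ell^2$, the relevant gain comes from almost-orthogonality rather than from the cube count. Concretely, writing $f = \widehat{P_N u_1}$, $g = \widehat{P_K u_2}$ supported on $\Z^n$ (thinking of $\T = \R/2\pi\Z$), the square of the left-hand side is, by Plancherel in $t$ over the window and then in $x$,
\begin{equation*}
\sum_{\eta \in \Z^n} \Big| \sum_{\xi_1 + \xi_2 = \eta} \widehat{\chi}\big(T(\varphi(\xi_1)+\varphi(\xi_2)-\text{something})\big) f(\xi_1) g(\xi_2) \Big|^2,
\end{equation*}
where $T = N^{-(\alpha-1)}$ and $\chi$ is the sharp cutoff to $[0,T]$, whose Fourier transform decays like $(1+T|\tau|)^{-1}$. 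By Cauchy–Schwarz in the inner sum, it suffices to bound, for each fixed output frequency $\eta$ and each fixed $\xi_2$ in the small ($\sim K$) support of $g$, the number of $\xi_1$ with $|\xi_1| \sim N$, $\xi_1 + \xi_2 = \eta$, for which the phase $\varphi(\xi_1) + \varphi(\xi_2)$ lies in an interval of length $O(T^{-1}) = O(N^{\alpha-1})$ — together with the tails coming from the slow decay of $\widehat\chi$, which are summed dyadically and cost only a constant (or at worst a logarithm, absorbed since we are free to shrink the window or use the stated $\alpha$-scaling).

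The heart of the matter is this counting estimate. Because $\varphi$ is of sum type, $\varphi(\xi_1) + \varphi(\xi_2) = \sum_{i=1}^n \big(\mu((\xi_1)_i) + \mu((\xi_2)_i)\big)$. With $\eta$ and $\xi_2$ fixed, $\xi_1$ is determined by $\eta - \xi_2$, so there is actually at most one $\xi_1$ — wait, that is too strong, so the right bookkeeping is instead: fix $\eta$, and count pairs $(\xi_1,\xi_2)$ with $\xi_1+\xi_2=\eta$, $|\xi_1|\sim N$, $|\xi_2|\sim K$, and $\varphi(\xi_1)+\varphi(\xi_2)$ in a fixed $O(N^{\alpha-1})$-interval; here $\xi_2$ ranges over $\sim K^n$ lattice points. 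Again using the sum structure, I would handle the $n-1$ "transverse" coordinates trivially (they contribute at most $K^{n-1}$ choices) and reserve the decisive transversality for a single coordinate, say the first: for that coordinate one has $|\mu'((\xi_1)_1)| \sim N^{\alpha-1}$ by the order-$\alpha$ hypothesis \eqref{eq:alphaGroupVelocity} and $|\mu'((\xi_2)_1)| \lesssim K^{\alpha-1} \ll N^{\alpha-1}$, so the map $(\xi_1)_1 \mapsto \mu((\xi_1)_1) + \mu((\xi_2)_1)$ is a bijection onto its image with derivative $\sim N^{\alpha-1}$; hence at most $O(1)$ integer values of $(\xi_1)_1$ fall into an interval of length $N^{\alpha-1}$. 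Multiplying, the count is $O(K^{n-1})$, and tracing this through Cauchy–Schwarz and Plancherel yields exactly the factor $K^{(n-1)/2} / N^{(\alpha-1)/2}$ after the $\tau$-integration contributes the $N^{-(\alpha-1)/2}$ from $\|\widehat\chi\|$-type bounds on the window of length $N^{-(\alpha-1)}$.

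I expect the main obstacle to be the interplay between the sharp time cutoff and the lattice counting: because $\chi_{[0,T]}$ does not have compactly supported Fourier transform, the clean "phase in an interval of length $T^{-1}$" statement must be replaced by a dyadic decomposition over shells $|\tau| \sim 2^\ell T^{-1}$, and one must check that summing the resulting counts $O(2^\ell K^{n-1})$ against the decay $(2^\ell)^{-2}$ from $|\widehat\chi(\tau)|^2$ still converges — it does, with room to spare, but this is where a naive argument would lose a logarithm, which is presumably why the companion theorems require $s>1$ rather than $s=1$. A secondary technical point is the reduction of the slowly-varying $\mu$ (rather than a pure power) to the monotone, single-signed-derivative regime on each dyadic block $|\xi| \sim N$; the slowly-varying hypothesis $\mu(x)\sim\mu(2x)$ guarantees $|\mu'|$ does not oscillate across a dyadic block, so after possibly splitting into $O(1)$ sub-blocks the bijectivity argument above applies verbatim. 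Everything else — passing between $\T$ and $\lambda\T$, handling the mean-zero restriction, and the case $K \sim 1$ — is routine.
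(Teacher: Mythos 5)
Your argument is correct in substance and rests on the same mechanism as the paper's proof: the sum-type hypothesis isolates a single distinguished coordinate (one in which the output frequency, hence $\xi_1$, has a component of size $\sim N$ -- you should say this explicitly, since \eqref{eq:alphaGroupVelocity} only controls the full gradient) where the group-velocity difference is $\sim N^{\alpha-1}$, so over a window of length $N^{-(\alpha-1)}$ that coordinate is pinned down up to $O(1)$, while the remaining $n-1$ coordinates of the low-frequency factor are counted trivially, giving $K^{n-1}$; Cauchy--Schwarz/Schur then produces $K^{(n-1)/2}N^{-(\alpha-1)/2}$. The implementation differs from the paper's: you pass to the space-time Fourier side and count lattice points in level sets of $\varphi(\xi_1)+\varphi(\xi_2)$ near each $\tau$, whereas the paper never takes a Fourier transform in time -- it majorizes the (nonnegative) integrand by a bump $\eta_\delta \geq \chi_{[0,N^{-(\alpha-1)}]}$, expands $\Vert u_1u_2\Vert_{L^2_x}^2$ pairwise, applies the mean value theorem to the phase difference $\psi_k(k_2^{(1)})-\psi_k(k_2^{(2)})$, and runs Young's/Schur's inequality on the kernel $\hat{\eta}_\delta(\cdot)$, which makes all tails harmless since $\hat{\eta}_\delta$ is integrable.

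Two points need attention. First, your displayed identity omits the $\tau$-integration: Plancherel in $t$ gives an integral over $\tau$ of the square, with kernel $T\hat{\chi}(T(\tau-\varphi(\xi_1)-\varphi(\xi_2)))$, and in the resulting Schur bookkeeping the factor $T=N^{-(\alpha-1)}$ comes from the counting factor $\sup_{\tau,\eta}\sum_{\xi_2} T|\hat{\chi}(T(\tau-\Phi))|\lesssim TK^{n-1}$, while the $\tau$-integral only contributes $\Vert\hat{\chi}\Vert_{L^1}$. Second, and more seriously, your tail analysis for the sharp cutoff is too optimistic: the comparison ``count $O(2^\ell K^{n-1})$ versus decay $(2^\ell)^{-2}$'' ignores that each dyadic shell in $T|\tau-\Phi|\sim 2^\ell$ also has $\tau$-measure $\sim 2^\ell/T$, so with the genuinely sharp cutoff (for which $\Vert\hat{\chi}\Vert_{L^1}=\infty$) the shells below the saturation scale each contribute comparably and one picks up a logarithm -- which cannot be ``absorbed'', since \eqref{eq:bilinearStrichartzEstimate} has no logarithmic loss (and the $s>1$ threshold in the paper is attributed to square-sum logarithms elsewhere, not to this). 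The fix is the one the paper uses and costs one line: since the time integrand is nonnegative, replace $\chi_{[0,T]}$ by a nonnegative Schwartz majorant before taking the Fourier transform; with that modification your counting argument closes and matches the paper's bound.
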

\begin{proof}
We find
\begin{equation*}
\begin{split}
u_1(t) &= \sum_{k_1 \in \mathbb{Z}^n} e^{i k_1. x} e^{it \varphi(k_1)} a(k_1), \quad u_2(t) = \sum_{k_2 \in \mathbb{Z}^n} e^{i k_2. x} e^{it \varphi(k_2)} b(k_2) \\
u_1 u_2(t) &= \sum_{k_1,k_2 \in \mathbb{Z}^n} e^{i(k_1+k_2).x} [ e^{it [ \varphi(k_1) + \varphi(k_2)]} a(k_1) b(k_2) ]
\end{split}
\end{equation*}
Consequently, Plancherel's theorem yields
\begin{equation}
\label{eq:L2Norm}
\begin{split}
\Vert u_1 u_2 \Vert^2_{L^2} &= \sum_{k \in \mathbb{Z}^n} \left| \sum_{k_2 \in \mathbb{Z}^n} e^{it( \varphi(k-k_2) + \varphi(k_2)} a(k-k_2) b(k_2) \right|^2 \\
&= \sum_{k \in \mathbb{Z}^n} \sum_{k_2^{(1)}, k_2^{(2)} \in \mathbb{Z}^n} e^{it([\varphi(k-k_2^{(1)}) + \varphi(k_2^{(1)})] - [ \varphi(k-k_2^{(2)}) + \varphi(k_2^{(2)})])} a(k-k_2^{(1)}) b(k_2^{(1)}) \\
&\times \overline{a(k-k_2^{(2)})} \overline{b(k_2^{(2)})}
\end{split}
\end{equation}
Set $\psi_k(k^\prime) = \varphi(k-k^\prime) + \varphi(k^\prime)$. Next, let $\eta_\delta(t) = \eta(t/\delta)$ where $\eta$ is a suitable bump function and majorize
\begin{equation*}
\int_0^{N^{-(\alpha-1)}} dt \Vert u_1 u_2(t) \Vert_{L^2(\mathbb{T}^n)}^2 \leq \int \eta_\delta(t) \Vert u_1 u_2(t) \Vert^2_{L^2(\mathbb{T}^n)}, \quad \delta = N^{-(\alpha-1)}
\end{equation*}
and we find
\begin{equation}
\label{eq:timeIntegralL2Norm}
\begin{split}
\int \eta_\delta(t) \eqref{eq:L2Norm}(t) dt &= \sum_{k \in \mathbb{Z}^n} \sum_{k_2^{(1)}, k_2^{(2)} \in \mathbb{Z}^n} \hat{\eta}_\delta(\psi_k(k_2^{(1)})-\psi_k(k_2^{(2)})) \\
&\times a(k-k_2^{(1)}) b(k_2^{(1)}) \overline{a(k-k_2^{(2)})} \overline{b(k_2^{(2)})}
\end{split}
\end{equation}
The inner sum we will estimate with Young's inequality. Note that
\begin{equation}
\label{eq:phaseDifference}
\begin{split}
&\psi_k(k_2^{(1)}) - \psi_k(k_2^{(2)}) \\
&= \int_0^1 \nabla \psi_k(k_2^{(2)} + t(k_2^{(1)} - k_2^{(2)})) (k_2^{(1)} - k_2^{(2)}) dt \\
&= \int_0^1 [\nabla \varphi(k_2^{(2)} + t(k_2^{(1)} - k_2^{(2)})) - \nabla \varphi(k - (k_2^{(2)} + t(k_2^{(1)}-k_2^{(2)})))] dt (k_2^{(1)} - k_2^{(2)})
\end{split}
\end{equation}
By assumption it is easy to see that there is one component of the integral which is of order $N^{\alpha-1}$ independent of $t$, say the first component. This gives
\begin{equation*}
\eqref{eq:phaseDifference} = (N^{\alpha-1} c_1(k_1,k_{21}^{(1)},k_{21}^{(2)})) (k_{21}^{(1)}-k_{21}^{(2)}) + \sum_{i=2}^n C_i(k_i,k_{2i}^{(1)},k_{2i}^{(2)})(k_{2i}^{(1)}-k_{2i}^{(2)}),
\end{equation*}
where due to our assumptions on $\mu$ there is $C>0$ so that
\begin{equation*}
C^{-1} \leq \pm c_1(k_1,k_{21}^{(1)},k_{21}^{(2)}) \leq C
\end{equation*}
An application of Young's inequality yields
\begin{equation*}
\begin{split}
\eqref{eq:timeIntegralL2Norm} &\lesssim \sum_{k \in \mathbb{Z}^n} \{ \sup_{k_2^{(2)} \in \mathbb{Z}^n} \sum_{k_2^{(1)} \in \mathbb{Z}^n} | \delta \hat{\eta}((\delta N^{\alpha-1} c_1(k_1,k_{21}^{(1)},k_{21}^{(2)})(k_{21}^{(1)}-k_{21}^{(2)}) \\
&+ \sum_{i=2}^n C_i(k_i,k_{2i}^{(1)},k_{2i}^{(2)})(k_{2i}^{(1)} - k_{2i}^{(2)}))| \} \times \sum_{k_2 \in \mathbb{Z}^n} |a(k-k_2) b(k_2)|^2
\end{split}
\end{equation*}
The sum $\sum_{k_{21}^{(1)} \in \mathbb{Z}} | \hat{\eta} (\ldots)|$ is majorized by $\int |\hat{\eta}(\xi)| d\xi$ and summing over the remaining indices yield a factor $K$ per summation.\\
Consequently,
\begin{equation*}
\begin{split}
\eqref{eq:timeIntegralL2Norm} &\lesssim \sum_{k \in \mathbb{Z}^n} \delta K^{n-1} \sum_{k_2 \in \mathbb{Z}^n} |a(k-k_2)|^2 |b(k_2)|^2 \\
&\lesssim \delta K^{n-1} \Vert a \Vert^2_{2} \Vert b \Vert^2_{2}
\end{split}
\end{equation*}
and the proof is complete.
\end{proof}
Observe how the special form of $\varphi$ comes into play in the expression \eqref{eq:phaseDifference} and the subsequent estimates. Although the intuition of Euclidean windows predicts Proposition \ref{prop:shorttimeBilinearStrichartzEstimate} to be true in greater generality, from the proof it is unclear how to extend the result to more general phase functions.

In the one-dimensional case this estimate was proved up to complex conjugation in \cite[Theorem~4,~p.~125]{MoyuaVega2008}.
We have a look at examples: In the one-dimensional case one can consider the equations:
\begin{equation*}
i \partial_t u + D^a u = 0, \quad a > 1, \quad u: \mathbb{R} \times \mathbb{T} \rightarrow \mathbb{C}
\end{equation*}
or, similarly,
\begin{equation*}
\partial_t u + \partial_x D_x^{a-1} u = 0, \quad u: \mathbb{R} \times \mathbb{T} \rightarrow \mathbb{R}
\end{equation*}
In both cases Proposition \ref{prop:shorttimeBilinearStrichartzEstimate} yields
\begin{equation*}
\Vert P_N u_1(t) P_K u_2(t) \Vert_{L_t^2([0,N^{-(\alpha-1)}],L^2(\mathbb{T}^n)} \lesssim N^{-\frac{\alpha-1}{2}} \Vert P_N u_1(0) \Vert_{L^2(\mathbb{T}^n)} \Vert P_K u_2(0) \Vert_{L^2(\mathbb{T}^n)}
\end{equation*}
In higher dimensions one can still consider
\begin{equation*}
i \partial_t u + \Delta u = 0, \quad u: \mathbb{R} \times \mathbb{T}^n \rightarrow \mathbb{C}
\end{equation*}
In this case Proposition \ref{prop:shorttimeBilinearStrichartzEstimate} recovers the result \cite[Theorem~1.2,~p.~343]{Hani2012} in the special case of $M = \mathbb{T}^{n}$.\\
Later, we will deal with derivative nonlinearities and in order to integrate by parts when carrying out energy estimates one had to consider the artifical nonlinearity $u \partial_x \overline{u}$, which can be treated without usage of shorttime norms. In two dimensions one has the more interesting example of the linear part of the Zakharov-Kuznetsov equation:
\begin{equation*}
\partial_t u + \partial_x^3 u + 3 \partial_x \partial_y^2 u = 0, \quad (x,t) \in \mathbb{T}^2 \times \mathbb{R}
\end{equation*}
Proposition \ref{prop:shorttimeBilinearStrichartzEstimate} does not apply directly because the dispersion relation $\varphi(k_1,k_2) = k_1^3 + 3 k_1 k_2^2$ does not separate time oscillations of the coordinates. However, performing a transformation symmetrizing the Zakharov-Kuznetsov equation (cf. \cite{BenArtziKochSaut2003,GruenrockHerr2014}) in momentum space we find
\begin{equation*}
\begin{split}
\varphi(A(k_1^\prime,k_2^\prime)) &= (k_1^\prime)^3 + (k_2^\prime)^3, \text{ where} \\
A(k_1^\prime,k_2^\prime) &= \mu ((k_1^\prime+k_2^\prime),(k_1^\prime-k_2^\prime)), \; \mu = 4^{-1/3}.
\end{split}
\end{equation*}
The linear transformation is essentially a rotation. Here, the fact that $A$ is up to a constant a matrix with integer coefficients becomes important in the proof of bilinear Strichartz estimates: This will make $A^{-1}(\Z^2)$ again a grid and allows us to carry out the argument in the subsequent estimate of \eqref{eq:timeIntegralL2Norm} in the rotated coordinates.\\
When we compute the $L^2$-norm of a product of free solutions we arrive at the expression
\begin{equation*}
\sum_{k \in \mathbb{Z}^n} \sum_{\substack{k_1^{(1)}+k_2^{(1)}=k,\\ k_1^{(2)}+k_2^{(2)}=k}} e^{it [ (\varphi(k_1^{(1)})+\varphi(k_2^{(1)})) - (\varphi(k_1^{(2)}) + \varphi(k_2^{(2)}))]} a(k_1^{(1)}) b(k_2^{(1)}) \overline{a(k_1^{(2)}) b(k_2^{2})}
\end{equation*}
and after changing variables $k = Am, \; k_i^{(j)} = A m_i^{(j)}$ we find
\begin{equation*}
\begin{split}
&\sum_{m \in A^{-1}(\mathbb{Z}^2)} \sum_{\substack{m_1^{(1)} + m_2^{(1)} = m, \\ m_1^{(2)} + m_2^{(2)}=m}} e^{it[( \chi(m_1^{(1)}) + \chi(m_2^{(1)})) - (\chi(m_1^{(2)}) + \chi(m_2^{(2)}))]} a(A m_1^{(1)}) b(A m_2^{(1)}) \\
&\times \overline{a(A m_1^{(2)})} \overline{b(A m_2^{(2)})},
\end{split}
\end{equation*}
where $\chi(k_1,k_2) = k_1^3 + k_2^3$ is of sum type.\\
Using this transformation one proves like above
\begin{equation*}
\begin{split}
&\Vert P_N e^{it \varphi_{ZK}(\nabla/i)} u_1(0) P_K e^{it \varphi_{ZK}(\nabla/i)} u_2(0) \Vert_{L_t^2([0,N^{-2}],L^2(\mathbb{T}^n))} \\
&\lesssim \frac{K^{1/2}}{N} \Vert P_N u_1(0) \Vert_{L^2} \Vert P_K u_2(0) \Vert_{L^2}
\end{split}
\end{equation*}
\begin{remark}
\label{remark:EuclideanTransversalityExtension}
We illustrate the argument and some of its consequences.\\
Suppose that $n=1$ and $u_1$ and $u_2$ have Fourier support in intervals $I_1$ and $I_2$, respectively, and for the sake of definiteness consider the dispersion relation $\varphi(\xi) = \xi^3$. Suppose that $I_1$, $I_2$ do not necessarily belong to dyadically separated annuli, but simply satisfy
\begin{equation*}
\nabla \varphi (\xi_1) - \nabla \varphi(\xi_2) \sim \pm N^2, \text{ where } \xi_i \in I_i
\end{equation*}
The Fourier support must be convex so that when we are integrating
\begin{equation*}
\int_0^1 \nabla \varphi(\underbrace{k_2^{(2)}+t(k_2^{(1)}-k_2^{(2)}}_{k^\prime})) - \nabla \varphi(k-k^\prime))dt
\end{equation*}
$k^\prime$ is always an element of $I_2$ and $k-k^\prime$ is always an element of $I_1$ yielding the integral to be $\sim \pm N^{\alpha-1}$. Then, the proof can be carried out along the above lines.
We shall see that we can also deal with $High \times High \times High \times Low \times \ldots$-interaction with two bilinear estimates:
Intuitively, this is the case because the difference of the group velocity is $\xi_1^2 - \xi_2^2 = (\xi_1-\xi_2)(\xi_1+\xi_2)$.\\
Three frequencies $\xi_1,\xi_2,\xi_3$ satisfy $|\xi| \sim N$ with nontrivial interaction. Consequently, $|\xi_1 + \xi_2| \sim N, |\xi_2+\xi_3| \sim N, |\xi_1+\xi_3| \sim N$. Due to nontrivial interaction there has to be one combination so that $|\xi_i-\xi_j| \sim N$ (otherwise $\xi_1+ \xi_2 + \xi_3 + remaining (smaller) frequencies$ do not sum up to zero.) This combination will be amenable to a bilinear Strichartz-estimate.\\
More precisely, in the proof of bilinear Strichartz estimates we come across the integral
\begin{equation}
\int_0^1 dt \nabla \phi(k_2^{(2)} + t(k_2^{(1)} - k_2^{(2)})) - \nabla \phi(k-(k_2^{(2)}+t(k_2^{(1)}-k_2^{(2)}))),
\end{equation}
which has to be of the form $N^2 c(k,k_2^{(1)},k_2^{(2)})$. Then the proof gives the desired estimate.\\
Moreover, it is straightforward to divide the frequency projector into the correct intervals: Write
\begin{equation}
P_N u_1 P_N u_2 P_N u_3 P_K u_4 \ldots = \sum_{I_1,I_2,I_3} P_{I_1} u_1 P_{I_2} u_2 P_{I_3} u_3 P_K u_4 \ldots
\end{equation}
Here, $I_i$ denote intervals of length $cN$, $c \ll 1$. With the intervals being of magnitude $cN$ there will be no loss summing up the different contributions at last. Note that for most of the cases we will just use the separation between $I_1$ and $I_2$, unless these intervals are actually neighbours. Say $I_1$ and $I_2$ are subsets of $\R^{>0}$. Hence, the other frequency will be negative: The mirrored interval of $I_3$ (where $\xi_3$ is contained) with nontrivial interaction must be at least one small interval apart from $I_1$ due to otherwise impossible frequency interaction. In this case the separation between $I_1$ and $I_3$ (and also $-I_3$) will be enough to apply a bilinear Strichartz estimate.\\
Consequently, we record the estimates
\begin{equation*}
\begin{split}
&\Vert S_{>N} (P_{N_1} e^{t \partial_x D_x^{\alpha-1}} u_1(0) P_{N_2} e^{t \partial_x D_x^{\alpha-1}} u_2(0)) \Vert_{L_t^2([0,N^{-(\alpha-1)}],L^2)} \\
&\lesssim N^{-\frac{\alpha-1}{2}} \Vert P_{N_1} u_1(0) \Vert_{L^2} \Vert P_{N_2} u_2(0) \Vert_{L^2}
\end{split}
\end{equation*}
where $S_{>\lambda}$ denotes the part where the modulus of the frequencies is separated of order $\lambda$ and $N_1 \sim N_2 \sim N$. This follows from the interval slicing argument depicted above.\\
Next, we turn to $High \times High \rightarrow High$-interaction in the twodimensional case:
\begin{equation*}
P_N e^{it \varphi(\nabla/i)} u_0 P_{N^\prime} e^{it \varphi(\nabla/i)} v_0
\end{equation*}
Here, we decompose 
\begin{equation*}
P_N = \sum_{I_1,I_2: dyadic} P_N^{I^N_1,I^N_2} + P_{N}^{I_1^N,I_2^{\ll N}} + P_N^{I_1^{\ll N},I_2^N}
\end{equation*}
where $P_N^{I_1,I_2}$ projects to the frequency subset of $A_N$ with first decoupled coordinates in $I_1$, second decoupled coordinates in $I_2$.\\
Suppose that $I_1$ is of magnitude $N$ (otherwise $I_2$ has to be). Then we find for the intervals describing the first decoupled frequency in $P_N^{I_1,I_2} u_2, P_N^{I_1,I_2} u_3$ the following dichotomy: Either one $I_1$ is of much smaller magnitude, then it is amenable to the $High \times Low$-estimate in one dimension, or all three intervals are of order $N$. Then we are in the $High \times High \rightarrow High$-interaction in one dimension and we can argue like above. Summing over dyadic intervals gives an additional factor $\log^6 (N)$.\\
Moreover, rescaling solutions $u(t,x) \rightarrow u(\lambda^k t, \lambda x)$ yields the estimate \eqref{eq:bilinearStrichartzEstimate} with the same constant on a rescaled domain.
\end{remark}
To conclude the section we compare the linear Strichartz estimate from \cite{LinaresPantheeRobertTzvetkov2018} to the bilinear estimate.\\
%For the Benjamin-Ono equation we have
%\begin{equation}
%\label{eq:linearBOStrichartzEstimate}
%\Vert e^{t \mathcal{H} \partial_{xx}} P_N u \Vert_{L_t^q([0,N^{-1}],L_x^p(\mathbb{T})} \lesssim \Vert P_N u \Vert_{L_x^2(\mathbb{T})}
%\end{equation}
%provided that $\frac{2}{q} + \frac{1}{p} = \frac{1}{2}$. This yields the worse bilinear Strichartz estimate
%\begin{equation*}
%\Vert e^{t \mathcal{H} \partial_{xx}} P_N u
%\end{equation*}
For \eqref{eq:ZakharovKuznetsovEquation} the linear Strichartz estimate \cite[Equation~(2.4),~p.~4]{LinaresPantheeRobertTzvetkov2018} reads
\begin{equation}
\label{eq:linearZKEstimate}
\Vert e^{it \varphi_{ZK}(\nabla/i)} P_N u \Vert_{L_t^2([0,N^{-2}], L_{x,y}^\infty)} \lesssim N^{-1/3} \Vert P_N u \Vert_{L^2(\mathbb{T}^2)}
\end{equation}
This yields the worse bilinear estimate
\begin{equation*}
\label{eq:bilinearEstimateHolder}
\begin{split}
&\Vert e^{it \varphi_{ZK}(\nabla/i)} P_N u_1 e^{it \varphi_{ZK}(\nabla/i)} P_K u_2 \Vert_{L_t^2([0,N^{-2}],L^2(\mathbb{T}^2))} \\
&\lesssim \Vert e^{it \varphi_{ZK}(\nabla/i)} P_N u \Vert_{L_t^2([0,N^{-2}], L_{x,y}^\infty)} \Vert e^{ it \varphi_{ZK}(\nabla/i)} P_K u_2 \Vert_{L_t^\infty L^2(\mathbb{T}^2)} \\
&\lesssim N^{-1/3} \Vert P_N u_1(0) \Vert_{L^2(\mathbb{T}^2)} \Vert P_K u_2(0) \Vert_{L^2(\mathbb{T}^2)}
\end{split}
\end{equation*}

\section{Notation and Function Spaces}
\label{section:FunctionSpaces}
We use shorttime $U^p$-/$V^p$-spaces which have previously been deployed for functions defined on the real line (see e.g. \cite{ChristHolmerTataru2012, KochTataru2012}). Here, we adapt to periodic solutions. For a detailed exposition on $U^p$-/$V^p$-spaces we refer to \cite{HadacHerrKoch2009}, see also \cite{HadacHerrKoch2009Addendum}. Below we collect the most important information to keep the exposition self-contained. The $V^p(I)$-spaces contain functions of bounded $p$-variation, where $p \in [1,\infty)$ and $U^p(I)$ is an atomic space which are the predual spaces of the $V^p(I)$-spaces. Here, $I=[a,b)$, where $-\infty \leq a < b \leq \infty$ and the functions under consideration will take values in $L^2(\T^n)$ (although the function space properties will remain valid for an arbitrary Hilbert space). We let $\mathcal{Z}(I)$ denote the set of all possible partitions of $I$, that are sequences $a=t_0 < t_1 < \ldots < t_K = b$.
\begin{definition}
Let $\{ t_k \}_{k=0}^K \in \mathcal{Z}(I)$ and $\{ \phi_k \}_{k=0}^{K-1} \subseteq L_x^2$ with $\sum_{k=1}^K \Vert \phi_{k-1} \Vert_{L_x^2}^p = 1$. Then, the function
\begin{equation}
a(t) = \sum_{k=1}^K \phi_{k-1} \chi_{[t_{k-1},t_k)}(t)
\end{equation}
is said to be a $U^p(I)$-atom. Further,
\begin{equation}
U^p(I)= \{ f: I \rightarrow L_x^2(\T^n) \, | \, \Vert f \Vert_{U^p(I)} < \infty \}
\end{equation}
where
\begin{equation}
\Vert f \Vert_{U^p(I)} = \inf \{ \Vert \lambda_k \Vert_{\ell_k^1} \, | \, f(t) = \sum_{k=0}^\infty \lambda_k a_k(t), \; a_k - U^p-\text{atom} \}
\end{equation}
\end{definition}
From the atomic representation we find elements $u(t) \in U^p(I)$ to be continuous from the right, having left-limits everywhere and admitting only countably many discontinuities (cf. \cite[Proposition~2.2,~p.~921]{HadacHerrKoch2009}). Properties of the spaces with bounded $p$-variation were already discussed in \cite{Wiener1979}.
\begin{definition}
We set
\begin{equation*}
V^p(I) = \{ v: I \rightarrow L_x^2 \, | \, \Vert v \Vert_{V^p(I)} < \infty \},
\end{equation*}
where
\begin{equation*}
\Vert v \Vert_{V^p(I)} = \sup_{\{t_k \}_{k=0}^{K-1} \in \mathcal{Z}(I)} \left( \sum_{k=1}^K \Vert v(t_k) - v(t_{k-1}) \Vert_{L_x^2}^p \right)^{1/p} < \infty
\end{equation*}
\end{definition}
We recall that one-sided limits exist for $V^p$-functions and again $V^p$-functions can only have countably many discontinuities (cf. \cite[Proposition~2.4,~p.~922]{HadacHerrKoch2009}).
In the following we will confine ourselves to considering the subspaces $V^p_{-,rc} \subseteq V^p$ of right-continuous functions vanishing at $-\infty$. For the sake of brevity we will write $V^p$ for $V^p_{-,rc}$.
\begin{definition}
We define the following subspaces of $V^2$, respectively $U^2$:
\begin{equation*}
\begin{split}
V_0^2(I) &= \{ v \in V^2(I) \, | \, v(a) = 0 \} \\
U_0^2(I) &= \{ u \in U^2(I) \, | \, u(b) = 0 \}
\end{split}
\end{equation*}
\end{definition}
These function spaces behave well with sharp cutoff functions contrary to $X^{s,b}$-spaces, where one has to use smooth cutoff functions. We have the following estimates for sharp cut-offs (cf. \cite[Equation~(2.2),~p.~55]{ChristHolmerTataru2012}):
\begin{equation*}
\begin{split}
\Vert u \Vert_{U^p(I)} &= \Vert \chi_I u \Vert_{U^p([-\infty,\infty))} \\
\Vert v \Vert_{V^p(I)} &\leq \Vert \chi_I u \Vert_{V^p([-\infty,\infty))} \leq 2 \Vert u \Vert_{V^p(I)}
\end{split}
\end{equation*}
We further record the following embedding properties:
\begin{lemma}
\label{lem:UpVpFunctionSpaceProperties}
Let $I = [a,b)$.
\begin{enumerate}
\item[1.] If $1 \leq p \leq q < \infty$, then $\Vert u \Vert_{U^q} \leq \Vert u \Vert_{U^p}$ and $\Vert u\Vert_{V^q} \leq \Vert u \Vert_{V^p}$.
\item[2.] If $1 \leq p < \infty$, then $\Vert u \Vert_{V^p} \lesssim \Vert u \Vert_{U^p}$.
\item[3.] If $1 \leq p < q < \infty$, $u(a) = 0$ and $u \in V^p$ is right-continuous, then $\Vert u \Vert_{U^q} \lesssim \Vert u \Vert_{V^p}$.
\item[4.] Let $1 \leq p < q <\infty$, $E$ be a Banach space and $T$ be a linear operator with
\begin{equation*}
\Vert Tu \Vert_E \leq C_q \Vert u \Vert_{U^q}, \; \Vert Tu \Vert_E \leq C_p \Vert u \Vert_{U^p}, \text{with} \; 0 < C_p \leq C_q.
\end{equation*}
Then,
\begin{equation*}
\Vert T u \Vert_E \lesssim \log \langle \frac{C_q}{C_p} \rangle \Vert u \Vert_{V^p}
\end{equation*}
\end{enumerate}
\end{lemma}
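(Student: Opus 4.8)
The plan is to follow the now-standard development of $U^p$-/$V^p$-space theory as in \cite{HadacHerrKoch2009}: all four assertions are contained there, and the point is to recall the mechanism behind each. The first claim rests on the elementary embedding $\ell^p\hookrightarrow\ell^q$ (with norm $\le 1$) for $p\le q$. For the $V$-part this is immediate: for any partition $\{t_k\}$ one has $\big(\sum_k\|v(t_k)-v(t_{k-1})\|_{L_x^2}^q\big)^{1/q}\le\big(\sum_k\|v(t_k)-v(t_{k-1})\|_{L_x^2}^p\big)^{1/p}$, and taking the supremum gives $\|v\|_{V^q}\le\|v\|_{V^p}$. For the $U$-part it suffices, by the atomic definition of the norm, to observe that a $U^p$-atom $a=\sum_{k=1}^K\phi_{k-1}\chi_{[t_{k-1},t_k)}$ already has $\|a\|_{U^q}\le 1$: if $c:=\big(\sum_k\|\phi_{k-1}\|_{L_x^2}^q\big)^{1/q}$ then $c\le 1$ and $a/c$ is a $U^q$-atom, so $\|a\|_{U^q}\le c\le 1$; feeding this into an atomic decomposition of $u$ yields $\|u\|_{U^q}\le\|u\|_{U^p}$.

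For the second claim ($U^p\hookrightarrow V^p$) I would again reduce to a single $U^p$-atom $a$ and bound its $p$-variation over an arbitrary partition $\{s_j\}$. Since $a$ is constant, equal to $\phi_k$, on each interval $[t_k,t_{k+1})$, every nonzero increment $a(s_j)-a(s_{j-1})$ equals $\phi_l-\phi_m$ for some indices, and a short bookkeeping argument shows each value $\phi_k$ occurs in at most two nonzero increments (once at the left end of the block of $s_j$'s landing in $[t_k,t_{k+1})$, once at its right end). Using $\|\phi_l-\phi_m\|^p\le 2^{p-1}(\|\phi_l\|^p+\|\phi_m\|^p)$ gives $\sum_j\|a(s_j)-a(s_{j-1})\|^p\le 2^p\sum_k\|\phi_k\|^p=2^p$, hence $\|a\|_{V^p}\le 2$, and the atomic decomposition yields $\|u\|_{V^p}\lesssim\|u\|_{U^p}$.

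Parts 3 and 4 share one construction, which is where the real work lies. Given $u\in V^p$ with $u(a)=0$, right-continuous, normalize $\|u\|_{V^p}=1$ and set, for each $n\ge 0$, $\tau_0^n=a$ and $\tau_k^n=\inf\{t\in(\tau_{k-1}^n,b]:\ \|u(t)-u(\tau_{k-1}^n)\|_{L_x^2}\ge 2^{-n}\}\wedge b$. Right-continuity makes these well defined, and the definition of $\|u\|_{V^p}$ forces the number $K_n$ of indices with $\tau_k^n<b$ to satisfy $K_n\lesssim 2^{np}$. Let $v_n$ be the step function equal to $u(\tau_{k-1}^n)$ on $[\tau_{k-1}^n,\tau_k^n)$; then $\|u-v_n\|_{L_t^\infty L_x^2}\le 2^{-n}$, so with $v_{-1}:=0$ the telescoping series $u=\sum_{n\ge 0}(v_n-v_{n-1})$ converges uniformly. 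Each $w_n:=v_n-v_{n-1}$ is a step function with $O(K_n+K_{n-1})$ steps and $\|w_n\|_{L_t^\infty L_x^2}\lesssim 2^{-n}$, and a step function with $L$ steps and sup-norm $\le A$ is, after dividing by $AL^{1/r}$, a $U^r$-atom; hence $\|w_n\|_{U^r}\lesssim 2^{-n}(K_n+K_{n-1})^{1/r}\lesssim 2^{-n(1-p/r)}$ for every $r\ge p$. For part 3, taking $r=q>p$ makes $\sum_n\|w_n\|_{U^q}\lesssim\sum_n 2^{-n(1-p/q)}<\infty$, so $\|u\|_{U^q}\lesssim 1=\|u\|_{V^p}$ (the $n=0$ term being handled by $\|v_0\|_{U^q}\lesssim 1$). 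For part 4, one has simultaneously $\|w_n\|_{U^q}\lesssim 2^{-n(1-p/q)}$ and $\|w_n\|_{U^p}\lesssim 1$, whence $\|Tw_n\|_E\le\min\!\big(C_q\|w_n\|_{U^q},\,C_p\|w_n\|_{U^p}\big)\lesssim\min\!\big(C_q 2^{-n(1-p/q)},C_p\big)$; summing over $n$, splitting the sum at the balancing index $n^*\sim\log\tfrac{C_q}{C_p}$ and using that the remaining tail is geometric, one obtains $\|Tu\|_E\lesssim_{p,q}C_p\big(1+\log\tfrac{C_q}{C_p}\big)\|u\|_{V^p}$, which is the asserted estimate (up to the harmless normalization hidden in $\langle\cdot\rangle$).

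The main obstacle is the stopping-time construction behind parts 3 and 4: one must arrange that each $v_n$ is itself right-continuous and belongs to the relevant space, verify the atom count $K_n\lesssim 2^{np}$ with care (the increments $\|u(\tau_k^n)-u(\tau_{k-1}^n)\|_{L_x^2}$ must be genuinely $\gtrsim 2^{-n}$, which is exactly where right-continuity and the use of an infimum enter), and check that the telescoping series converges not merely uniformly but in $U^q$. Once this machinery is in place, everything else is bookkeeping with the $\ell^p\hookrightarrow\ell^q$ inequality and the atomic definition of the norms.
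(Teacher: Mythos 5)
Your proof is correct and follows essentially the same route the paper relies on: parts 1--2 via the $\ell^p\hookrightarrow\ell^q$ embedding and the atomic structure, and parts 3--4 via the stopping-time step-function decomposition and dyadic interpolation, which are exactly the arguments of the reference (Hadac--Herr--Koch) that the paper cites instead of reproducing. One small remark: what you actually establish in part 4 is $\Vert Tu \Vert_E \lesssim C_p\bigl(1+\log\tfrac{C_q}{C_p}\bigr)\Vert u \Vert_{V^p}$, which is the correct standard form (and the form used later in the paper, e.g. the $N_1^{-1/2}\log\langle N_1/N\rangle$ bilinear bound); the factor $C_p$ is missing from the lemma as stated, so your version is the right one.
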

\begin{proof}
The first part follows from the embedding properties of the $\ell^p$-norms and the second part from considering $U^p$-atoms. For the third claim see \cite[Corollary~2.6,~p.~923]{HadacHerrKoch2009}.
\end{proof}

\begin{definition}
We define
\begin{equation*}
DU^2(I) = \{ \partial_t u \, | \, u \in U^2(I) \}
\end{equation*}
with the derivative taken in the sense of generalized functions.
\end{definition}
We observe that for any $f \in DU^2(I)$, the function $u \in U^2(I)$ satisfying $\partial_t u = f$ is unique up to constants. Fixing the right limit to be zero, we can set
\begin{equation*}
\Vert f \Vert_{DU^2(I)} = \Vert u \Vert_{U^2(I)}, \quad f = \partial_t u, \quad u \in U_0^2
\end{equation*}
which makes $DU^2(I)$ a Banach space. We have the following embedding property (cf. \cite[p.~56]{ChristHolmerTataru2012}):
\begin{lemma}
Let $I = [a,b)$. Then,
\begin{equation*}
L^1(I) \hookrightarrow DU^2(I)
\end{equation*}
\end{lemma}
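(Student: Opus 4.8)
The plan is to show $L^1(I) \hookrightarrow DU^2(I)$ by taking $f \in L^1(I)$ and defining $u(t) = \int_a^t f(s)\,ds$ (with the right-limit normalization absorbed into a constant, which does not affect $DU^2$), and then estimating $\Vert u \Vert_{U^2(I)}$ directly against $\Vert f \Vert_{L^1(I)}$ by constructing an explicit atomic decomposition of $u$. The key observation is that an indefinite integral of an $L^1$ function is a continuous function of bounded variation, and bounded-variation functions are essentially limits of step functions — which are exactly what $U^2$-atoms are built from.

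First I would reduce to the case $I = [-\infty,\infty)$ using the sharp-cutoff identity $\Vert u \Vert_{U^p(I)} = \Vert \chi_I u \Vert_{U^p([-\infty,\infty))}$ recorded above, or simply work on $I=[a,b)$ directly. Next, given $f \in L^1(I)$, I would use absolute continuity of the integral to choose a partition $a = t_0 < t_1 < \cdots < t_K = b$ such that $\int_{t_{k-1}}^{t_k} \Vert f(s) \Vert_{L^2_x}\,ds \le \varepsilon$ on each subinterval; more usefully, for the atomic bound one wants to write $u$ as a telescoping sum. The cleanest route: approximate $f$ by simple functions $f_n \to f$ in $L^1(I)$, for which $u_n(t) = \int_a^t f_n$ is piecewise linear, hence trivially in $U^2$ with $\Vert u_n \Vert_{U^2} \lesssim \Vert f_n \Vert_{L^1}$ by summing the step-increments as atoms; then pass to the limit using completeness of $U^2(I)$ and the bound $\Vert u_n - u_m \Vert_{U^2} \lesssim \Vert f_n - f_m \Vert_{L^1}$. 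Alternatively, and perhaps more transparently, one shows that any function $u$ with $u(a)=0$ satisfies $\Vert u \Vert_{U^2(I)} \le \mathrm{Var}(u;I)$ (the total variation), by the standard fact that a BV function is a uniform limit of its step-function sampling along finer partitions and each such step function, after normalizing $\ell^1$-mass, is a sum of atoms with total coefficient $\mathrm{Var}$; and $\mathrm{Var}(u;I) \le \int_I \Vert f(s)\Vert_{L^2_x}\,ds = \Vert f \Vert_{L^1(I)}$ when $u' = f$.

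The main technical point — and the only place one must be a little careful — is the inequality $\Vert u \Vert_{U^2(I)} \lesssim \mathrm{Var}(u;I)$ for right-continuous $u$ with $u(a) = 0$. Here one has to check that the step-function approximants $u_K(t) = \sum_k u(t_k)\chi_{[t_k,t_{k+1})}(t)$ converge to $u$ \emph{in $U^2$} (not merely pointwise or uniformly), which follows because $u - u_K$ itself has variation tending to $0$ as the mesh refines — using continuity of $u$ inherited from $f \in L^1$ — combined with $\Vert v \Vert_{U^2} \le \mathrm{Var}(v;I)$ applied to the tail. Writing each normalized step function as a single atom (or a finite sum of atoms with $\ell^1$-coefficients summing to its variation) then gives the bound. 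I expect no genuine obstacle: this is the periodic analogue of a lemma already established on the line in \cite{ChristHolmerTataru2012} and \cite{HadacHerrKoch2009}, and the Hilbert-space-valued nature of the argument changes nothing, since $U^2$, $V^2$ and the atomic construction are defined verbatim for $L^2(\T^n)$-valued functions.
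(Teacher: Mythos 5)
Two remarks before the substance: the paper itself does not prove this lemma at all, it simply cites \cite[p.~56]{ChristHolmerTataru2012}; and your overall plan is the right one in outline. Taking the primitive $u$ of $f$, observing $\mathrm{Var}(u;I)=\sup_{\{t_k\}}\sum_k\Vert\int_{t_{k-1}}^{t_k}f\,ds\Vert_{L^2_x}\le\Vert f\Vert_{L^1(I;L^2_x)}$, and adjusting by a constant (whose $U^2$ norm is at most $\Vert\int_I f\Vert_{L^2_x}\le\Vert f\Vert_{L^1}$) so as to land in the $U^2_0$ normalization used in the definition of $\Vert\cdot\Vert_{DU^2}$, reduces everything to the inequality $\Vert u\Vert_{U^2(I)}\lesssim\mathrm{Var}(u;I)$ for right-continuous $u$ vanishing at the prescribed endpoint. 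Also your parenthetical correction is needed: a sampled step function is \emph{not} a single atom with the right normalization (the $\ell^2$ normalization of its values can be huge); one must decompose it into Heaviside-type jumps, which indeed gives $\Vert u_K\Vert_{U^2}\le\sum_k\Vert u(t_k)-u(t_{k-1})\Vert_{L^2_x}\le\mathrm{Var}(u;I)$.

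The genuine gap is exactly at the step you flag as the only delicate one, and your proposed justification fails. The claim that $\mathrm{Var}(u-u_K;I)\to0$ as the mesh refines is false even for $u$ absolutely continuous: for the scalar example $u(t)=t$ on $[0,1)$, $u-u_K$ is a sawtooth of total variation $\approx2$ for every mesh, so the sampled step functions do \emph{not} converge to $u$ in variation (i.e.\ in $U^1$), and uniform convergence alone does not keep the limit in a $U^2$ ball --- closedness of $U^2$ balls under pointwise or uniform limits is precisely the delicate duality point behind \cite{HadacHerrKoch2009Addendum}, not a triviality. The other route you sketch is circular: the Cauchy estimate $\Vert u_n-u_m\Vert_{U^2}\lesssim\Vert f_n-f_m\Vert_{L^1}$ is the embedding being proved, applied to $f_n-f_m$. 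The inequality $\Vert u\Vert_{U^2}\lesssim\Vert u\Vert_{V^1}$ is not provable by naive step approximation; it is the embedding $V^p\hookrightarrow U^q$ for $p<q$, i.e.\ item 3 of Lemma \ref{lem:UpVpFunctionSpaceProperties} with $p=1$, $q=2$, whose proof in \cite[Lemma~2.5, Corollary~2.6]{HadacHerrKoch2009} uses a stopping-time/level-set decomposition and exploits $p<q$ essentially (recall $V^2\not\hookrightarrow U^2$, so some such mechanism is unavoidable; one could also repair your limiting argument by estimating $\Vert u-u_K\Vert_{U^2}\lesssim\Vert u-u_K\Vert_{V^p}$ for some $1<p<2$ and interpolating $V^p$ between $L^\infty$ and $V^1$, but that again invokes the same nontrivial embedding). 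Once you simply cite that item instead of reproving it, your first paragraph closes the argument; alternatively, the standard short route is by duality, $\bigl|\int_I\langle f(t),v(t)\rangle_{L^2_x}\,dt\bigr|\le\Vert f\Vert_{L^1}\sup_{t}\Vert v(t)\Vert_{L^2_x}\le\Vert f\Vert_{L^1}\Vert v\Vert_{V^2}$ for $v\in V^2_0$, in the spirit of the duality lemma recorded immediately after this one and of \cite{ChristHolmerTataru2012}.
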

%\begin{proof}
%We argue via atomic representation of $U^2$-functions. The time derivatives of $U^2$-atoms are linear combinations of $\delta$-functions and the $L^1$-norm is controlled by the $U^2$-norm.
%\end{proof}
We have the following lemma on $DU-V$-duality:
\begin{lemma}{\cite[Proposition~2.10,~p.~925]{HadacHerrKoch2009}}
We have $(DU^2(I))^* = V_0^2(I)$ with respect to a duality relation which for $f \in L^1(I) \subseteq DU^2(I)$ is given by
\begin{equation*}
\langle f, v \rangle = \int_a^b \langle f(t), v(t) \rangle_{L_x^2} dt = \int_a^b \int f \overline{v} dx dt
\end{equation*}
Moreover,
\begin{equation*}
\Vert f \Vert_{DU^2(I)} = \sup_{\Vert v \Vert_{V_0^2} = 1} \left| \int_a^b \int f \overline{v} dx dt \right|
\end{equation*}
\end{lemma}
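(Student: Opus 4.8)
The plan is to reduce this to the $U^2$--$V^2$ duality theory of \cite{HadacHerrKoch2009} and then to check that the abstract duality pairing coincides with the explicit integral pairing $\int_a^b\int f\overline v\,dx\,dt$ on a dense subclass. The point of departure is that, by the definition of $\Vert\cdot\Vert_{DU^2(I)}$, the operator $\partial_t$ is an isometric isomorphism of $U_0^2(I)$ onto $DU^2(I)$, so it suffices to describe $(U_0^2(I))^{*}$. Recall that \cite{HadacHerrKoch2009} provides a bounded bilinear form $B$ on $U^2(I)\times V^2(I)$ with $|B(u,v)|\le\Vert u\Vert_{U^2}\Vert v\Vert_{V^2}$, which realizes $V^2(I)$ as the dual of $U^2(I)$ and satisfies the pre-dual identity $\Vert u\Vert_{U^2}=\sup_{\Vert v\Vert_{V^2}\le1}|B(u,v)|$. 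I would then evaluate $B$ on the dense class of step functions: for $u(t)=\sum_{k=1}^{K}\phi_{k-1}\chi_{[t_{k-1},t_k)}(t)$ the distributional derivative $\partial_t u$ is a finite linear combination of $L_x^2$-valued Dirac masses, and a short Abel summation gives, for right-continuous $v$,
\begin{equation*}
\int_a^b\langle\partial_t u(t),v(t)\rangle_{L_x^2}\,dt=-\sum_{k=1}^{K}\langle\phi_{k-1},v(t_k)-v(t_{k-1})\rangle_{L_x^2},
\end{equation*}
which is precisely $-B(u,v)$ once the boundary contributions at $t=a$ and $t=b$ are discarded --- and these vanish exactly because $u(b)=0$ for $u\in U_0^2$ and $v(a)=0$ for $v\in V_0^2$. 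By density of step functions and continuity of both sides this identity extends to all of $U_0^2(I)\times V_0^2(I)$, and for $f\in L^1(I)\subseteq DU^2(I)$ it reads $\langle f,v\rangle=\int_a^b\int f\overline v\,dx\,dt$.

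Given this identification, both halves of the statement follow formally. For the inclusion $V_0^2(I)\hookrightarrow(DU^2(I))^{*}$ together with $\sup_{\Vert v\Vert_{V_0^2}=1}|\int_a^b\int f\overline v\,dx\,dt|\le\Vert f\Vert_{DU^2(I)}$ one simply uses $|B(u,v)|\le\Vert u\Vert_{U^2}\Vert v\Vert_{V^2}=\Vert f\Vert_{DU^2(I)}\Vert v\Vert_{V_0^2(I)}$, recalling that the $V^2$-seminorm restricts to a norm on $V_0^2$. For the reverse inequality and for surjectivity I would run the argument backwards: given $f=\partial_t u$ with $u\in U_0^2(I)$, the pre-dual identity produces $v\in V^2(I)$, $\Vert v\Vert_{V^2}\le1$, with $|B(u,v)|$ as close as we wish to $\Vert u\Vert_{U^2}=\Vert f\Vert_{DU^2(I)}$; replacing $v$ by $v-v(a)$ changes neither the $V^2$-seminorm nor the value of $B(u,\cdot)$ on $U_0^2$ (constants lie in the kernel of $B(u,\cdot)$ when $u(b)=0$), so we may take $v\in V_0^2(I)$. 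Combining this with a Hahn--Banach extension of an arbitrary functional on $DU^2(I)\cong U_0^2(I)$ to $U^2(I)$ and its $B$-representation yields both that every element of $(DU^2(I))^{*}$ is given by some $v\in V_0^2(I)$ and the norm identity $\Vert f\Vert_{DU^2(I)}=\sup_{\Vert v\Vert_{V_0^2}=1}|\int_a^b\int f\overline v\,dx\,dt|$.

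The step I expect to cost the most care --- indeed the only genuinely delicate one --- is the endpoint bookkeeping: tracking one-sided limits, the right-continuity normalization, and the vanishing conditions built into $U_0^2(I)$ and $V_0^2(I)$, so that the boundary terms really cancel and one lands on $V_0^2(I)$ on the nose rather than on a quotient of $V^2(I)$. All of this is done carefully in \cite[Section~2]{HadacHerrKoch2009}, so in the write-up I would invoke their statements directly and only make the integration-by-parts identity displayed above explicit.
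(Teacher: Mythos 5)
Your proposal is correct and follows essentially the same route as the paper: the paper gives no proof of this lemma at all, citing it directly as \cite[Proposition~2.10]{HadacHerrKoch2009}, and your argument is precisely the standard reduction to that $U^2$--$V^2$ duality (via the isometry $\partial_t: U_0^2(I)\to DU^2(I)$, the integration-by-parts identification of the pairing on step functions, and the endpoint normalizations), with the delicate bookkeeping deferred to the same reference. Nothing in your sketch conflicts with the cited result, so there is no substantive difference to report.
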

For $f \in DU^2(I)$ one can still consider a related mapping, but this requires more careful considerations (cf. \cite[Theorem~2.8,~p.~924]{HadacHerrKoch2009}).\\
Next, we consider a linear dispersive PDE on $\mathbb{T}^n$:
\begin{equation*}
i \partial_t u + \varphi(\nabla/i) u = 0, \quad u: \mathbb{R} \times \mathbb{T}^n \rightarrow \mathbb{C}
\end{equation*}
Adapting $U^p$-/$V^p$-spaces to the linear propagator $e^{i t \varphi(\nabla/i)}$ yields the following function spaces:
\begin{equation*}
\begin{split}
\Vert u \Vert_{U^p_{\varphi}(I;H)} &= \Vert e^{-it \varphi(\nabla/i)} u \Vert_{U^p(I;H)} \\
\Vert v \Vert_{V^p_{\varphi}(I;H)} &= \Vert e^{-it \varphi(\nabla/i)} v \Vert_{V^p(I;H)} \\
\Vert u \Vert_{DU^2_{\varphi}(I;H)} &= \Vert e^{-it \varphi(\nabla/i)} u \Vert_{DU^2(I;H)}
\end{split}
\end{equation*}
$U^p_\varphi$-atoms are piecewise free solutions.\\
Suppose that $\varphi$ satisfies \eqref{eq:alphaGroupVelocity} for some $\alpha > 1$. That means that waves with frequency localization $N$ travel with a speed of $N^{\alpha-1}$. Consequently, in a time interval of size $N^{-(\alpha-1)}$ there should be no difference observing the waves on $\mathbb{T}^n$ or $\mathbb{R}^n$. Correspondingly, we define the shorttime spaces for $\varphi$ with a time localization of order $N^{-(\alpha-1)}$ for frequencies $N$.\\
The frequency projector is defined as
\begin{equation*}
(P_N f) \widehat (\xi) = 
\begin{cases}
1_{[N,2N)}(|\xi|) \hat{f}(\xi), \quad N \in 2^{\mathbb{N}} \\
1_{[0,2)}(|\xi|) \hat{f}(\xi), \quad N = 1
\end{cases}
\end{equation*}
We define the shorttime $U^2$-space into which we will place the solution as
\begin{equation}
\Vert u \Vert^2_{F^s} = \sum_{N \geq 1} N^{2s} \sup_{|I| = N^{-(\alpha-1)}} \Vert \chi_I P_N u \Vert_{U^2_{\varphi}(I;L^2)}^2
\end{equation}
The space into which we will place the nonlinearity is given as
\begin{equation}
\Vert f \Vert^2_{N^s} = \sum_{N \geq 1} N^{2s} \sup_{|I| = N^{-(\alpha-1)}} \Vert \chi_I P_N u \Vert_{DU^2_{\varphi}(I;L^2)}^2
\end{equation}
The frequency dependent time localization erases the dependence on the initial data away from the origin. Instead of a common energy space $C([0,T],H^s)$ this forces us to consider the following space: 
\begin{equation}
\Vert u \Vert^2_{E^s(T)} = \sum_{N \geq 1} N^{2s} \sup_{t \in [0,T]} \Vert P_N u(t) \Vert^2_{L^2}
\end{equation}
This space deviates from the usual energy space logarithmically.\\
Next, consider the nonlinear equation
\begin{equation}
\label{eq:nonlinearDispersivePDE}
i \partial_t u + \varphi(\nabla/i) u = F(u), \quad u: \mathbb{R} \times \mathbb{T}^n \rightarrow \mathbb{C}.
\end{equation}
with $\varphi$ satisfying \eqref{eq:alphaGroupVelocity} for some $\alpha > 1$. The propagation of $u$ in the $F^s$-spaces is described by the following lemma:
\begin{lemma}
Let $u$ be a solution to \eqref{eq:nonlinearDispersivePDE}. Then, we find the following estimate to hold:
\begin{equation}
\Vert u \Vert_{F^s_{\varphi}} \lesssim \Vert u \Vert_{E^s(T)} + \Vert F(u) \Vert_{N_\varphi^s(T)}
\end{equation}
\end{lemma}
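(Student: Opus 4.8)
The plan is to prove this by the classical energy--Duhamel argument adapted to the short-time $U^2$-spaces. First I would unravel the definition of $\Vert\cdot\Vert_{F^s_\varphi}$: since this norm is an $\ell^2$-weighted sum over dyadic frequencies $N$ of $\sup_{|I|=N^{-(\alpha-1)}}\Vert\chi_I P_N u\Vert_{U^2_\varphi(I;L^2)}^2$, it suffices to establish, for each dyadic $N\ge1$ and each interval $I=[a,b)$ of length $N^{-(\alpha-1)}$ meeting $[0,T]$, the single-block bound
\begin{equation*}
\Vert \chi_I P_N u\Vert_{U^2_\varphi(I;L^2)}\ \lesssim\ \sup_{t\in[0,T]}\Vert P_N u(t)\Vert_{L^2}\ +\ \Vert \chi_I P_N F(u)\Vert_{DU^2_\varphi(I;L^2)}.
\end{equation*}
Taking the supremum over such $I$, multiplying by $N^{2s}$, summing in $N$, and using $\sqrt{x+y}\le\sqrt x+\sqrt y$ then yields exactly $\Vert u\Vert_{F^s_\varphi}\lesssim\Vert u\Vert_{E^s(T)}+\Vert F(u)\Vert_{N^s_\varphi(T)}$ by the definitions of $E^s(T)$ and $N^s_\varphi(T)$.

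To get the single-block bound, fix $N$ and $I=[a,b)$ and apply $P_N$ to \eqref{eq:nonlinearDispersivePDE}: the function $v:=P_N u$ solves $i\partial_t v+\varphi(\nabla/i)v=P_N F(u)$ on $I$, so Duhamel's formula based at $a$ gives
\begin{equation*}
v(t)=e^{i(t-a)\varphi(\nabla/i)}v(a)-i\int_a^t e^{i(t-s)\varphi(\nabla/i)}\,P_N F(u)(s)\,ds,\qquad t\in I.
\end{equation*}
The homogeneous term, cut off by $\chi_I$, is $\Vert v(a)\Vert_{L^2}$ times the trivial one-interval $U^2_\varphi(I)$-atom, so its $U^2_\varphi(I)$-norm is at most $\Vert P_N u(a)\Vert_{L^2}\le\sup_{t\in[0,T]}\Vert P_N u(t)\Vert_{L^2}$. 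For the Duhamel term I would conjugate by $e^{-it\varphi(\nabla/i)}$ to turn it into an ordinary antiderivative and then use the definition of the $DU^2$-norm together with $DU$--$V$ duality (cf. the proposition $(DU^2(I))^\ast=V_0^2(I)$ and the embedding $L^1(I)\hookrightarrow DU^2(I)$ quoted above, and \cite[p.~56]{ChristHolmerTataru2012}): this shows that $g\mapsto\int_a^{\,\cdot}e^{i(\cdot-s)\varphi(\nabla/i)}g(s)\,ds$ maps $DU^2_\varphi(I;L^2)$ into $U^2_\varphi(I;L^2)$ with a universal constant. The one wrinkle is that the antiderivative normalized in the $DU^2$-norm vanishes at the right endpoint $b$ rather than at $a$; correcting for this produces one extra free solution whose $L^2$-datum is again bounded by $\Vert P_N F(u)\Vert_{DU^2_\varphi(I)}$ (using $\Vert w\Vert_{L^\infty_tL^2_x}\lesssim\Vert w\Vert_{U^2}$), which is harmless. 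Finally, the sharp-cutoff identities $\Vert w\Vert_{U^2(I)}=\Vert\chi_I w\Vert_{U^2}$ recorded right after the $U^p$/$V^p$-definitions let me pass freely between the cut-off and non-cut-off norms, completing the single-block estimate.

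Two routine points remain. For intervals $I$ that protrude past the endpoints of $[0,T]$ I would extend $u$ beyond $[0,T]$ by the free flow $e^{it\varphi(\nabla/i)}$ of its boundary value; this leaves $F(u)$ on $[0,T]$ untouched and changes the norms only by harmless constants, while right-continuity of $U^2_\varphi$-elements makes $v(a)$ well defined. And one must check that the two pieces, weighted by $N^{2s}$ and summed over $N$, reassemble precisely into $\Vert u\Vert_{E^s(T)}^2$ and $\Vert F(u)\Vert_{N^s_\varphi(T)}^2$, which is immediate from the definitions. The main---and essentially only---obstacle is the boundedness of the Duhamel map $DU^2_\varphi(I)\to U^2_\varphi(I)$ together with the bookkeeping of the sharp time cutoffs; both are standard ingredients of the $U^p$/$V^p$-calculus recalled in this section, so the argument needs no input beyond them.
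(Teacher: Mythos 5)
Your argument is correct and is essentially the paper's proof: project to frequency $N$, write the Duhamel formula on each interval of length $N^{-(\alpha-1)}$, bound the homogeneous part by the energy piece and the inhomogeneous part by the $DU^2_\varphi$-norm, then square-sum with weights $N^{2s}$. The paper compresses all of this into ``the claim follows from the above display by the definition of the function spaces and unitarity of the propagator,'' while you correctly supply the details it leaves implicit (the $DU^2_\varphi(I)\to U^2_\varphi(I)$ bound for the Duhamel map and the harmless endpoint renormalization of the antiderivative).
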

\begin{proof}
Consider for $N \geq 1$ some $I = [t_0,t_1] \subseteq [0,1]$, $|I|=N^{-(\alpha-1)}$. Afer projecting \eqref{eq:nonlinearDispersivePDE} to frequencies of size $N$ we find
\begin{equation*}
P_N u(t) = e^{i(t-t_0) \varphi(\nabla/i)} P_N u(t_0) - i \int_{t_0}^t e^{i(t-s) \varphi(\nabla/i)} P_N F(u)(s) ds
\end{equation*}
on $I$. The claim follows from the above display by the definition of the function spaces and unitarity of the propagator.
\end{proof}
Since $U^2_\varphi$-atoms are piecewise free solutions, we can infer the following estimates. This conclusion is commonly referred to as transfer principle.
\begin{proposition}
Let $N_1 \gg N_2 \sim N_3$ and $I$ and $J$ be intervals with $|I|=N_1^{-1}$, $|J|=N_2^{-1}$. Then, we find the following estimates to hold:
\begin{equation*}
\begin{split}
\Vert P_{N_1} u_1 P_{N_2} u_2 \Vert_{L^2(I,L^2(\mathbb{T}))} &\lesssim N_1^{-1/2} \Vert P_{N_1} u_1 \Vert_{U^2_{BO}(I)} \Vert P_{N_2} u_2 \Vert_{U^2_{BO}(I)} \\
\Vert P_{N_1} u_1 P_{N_2} u_2 \Vert_{L^2(I,L^2(\mathbb{T}))} &\lesssim N_1^{-1/2} \log \langle \frac{N_1}{N} \rangle \Vert P_{N_1} u_1 \Vert_{V^2_{BO}(I)} \Vert P_{N_2} u_2 \Vert_{V^2_{BO}(I)} \\
\Vert S_{\gtrsim N_2} (P_{N_2} u_2 P_{N_3} u_3) \Vert_{L^2(J,L^2(\mathbb{T}))} &\lesssim N_2^{-1/2} \Vert P_{N_2} u_2 \Vert_{U^2_{BO}(J)} \Vert P_{N_3} u_3 \Vert_{U^2_{BO}(J)}
\end{split}
\end{equation*}
\end{proposition}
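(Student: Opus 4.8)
The plan is to deduce the three estimates from the shorttime bilinear Strichartz estimate of Proposition~\ref{prop:shorttimeBilinearStrichartzEstimate} --- and, for the third one, from the $High\times High\rightarrow High$ variant recorded in Remark~\ref{remark:EuclideanTransversalityExtension} --- by the transfer principle, exploiting that $U^2_\varphi$-atoms are piecewise free solutions. Here $\varphi=\varphi_{BO}$ with $\varphi_{BO}(\xi)=\xi|\xi|$, which is of sum type on $\mathbb{R}$ (with slowly varying $\mu(x)=x|x|$) and satisfies \eqref{eq:alphaGroupVelocity} with $\alpha=2$ because $|\nabla\varphi_{BO}(\xi)|=2|\xi|\sim N$ for $|\xi|\sim N$. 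Thus Proposition~\ref{prop:shorttimeBilinearStrichartzEstimate} gives, for free solutions and any interval $I$ with $|I|=N_1^{-1}$,
\[
\Vert P_{N_1}e^{it\varphi_{BO}(\nabla/i)}f\,\,P_{N_2}e^{it\varphi_{BO}(\nabla/i)}g\Vert_{L^2_{t,x}(I\times\mathbb{T})}\lesssim N_1^{-1/2}\Vert f\Vert_{L^2}\Vert g\Vert_{L^2},
\]
while the interval slicing of Remark~\ref{remark:EuclideanTransversalityExtension} gives the same bound with $P_{N_2},P_{N_3}$ and an $S_{\gtrsim N_2}$ in front, gain $N_2^{-1/2}$, and $|I|=N_2^{-1}$, provided $N_2\sim N_3$.

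\smallskip
\noindent\textbf{Estimates (1) and (3).} The bilinear map $(u_1,u_2)\mapsto P_{N_1}u_1\,P_{N_2}u_2$ (respectively $(u_2,u_3)\mapsto S_{\gtrsim N_2}(P_{N_2}u_2\,P_{N_3}u_3)$) is local in time since $P_N$ and $S_{\gtrsim N}$ act only in $x$. By bilinearity and the atomic structure of $U^2_\varphi$ it suffices to bound $\Vert P_{N_1}a\,P_{N_2}b\Vert_{L^2(I,L^2)}$ by $N_1^{-1/2}$ for $U^2_\varphi$-atoms $a,b$. Passing to the common refinement of the two partitions of $I$ underlying $a$ and $b$ --- whose pieces are the nonempty pairwise intersections, hence all contained in $I$ and indexed injectively by the pairs of original pieces --- the atoms restrict to free solutions on each refinement piece. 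Applying the free-solution estimate above on each piece (legitimate because each piece is a subinterval of $I$, which sits inside a window of length $N_1^{-1}$) and summing using $L^2_{t,x}$-orthogonality in $t$ together with $\sum\Vert f\Vert_{L^2}^2\Vert g\Vert_{L^2}^2\le(\sum\Vert f\Vert_{L^2}^2)(\sum\Vert g\Vert_{L^2}^2)=1$ yields the atomic bound; the general bound follows by summing atomic decompositions. Estimate (3) is identical, with the high-high-to-high free bound and the window $|J|=N_2^{-1}$ in place.

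\smallskip
\noindent\textbf{Estimate (2).} This is the upgrade from $U^2_\varphi$ to $V^2_\varphi$ at logarithmic cost. Freezing $P_{N_1}u_1$ and viewing $u_2\mapsto P_{N_1}u_1\,P_{N_2}u_2$ as a linear map into $L^2(I,L^2)$, estimate (1) bounds it with constant $\lesssim N_1^{-1/2}\Vert P_{N_1}u_1\Vert_{U^2_\varphi}$, while the crude bound coming from H\"older in $t$, Bernstein in $x$ on the low-frequency factor, and $\Vert\cdot\Vert_{L^\infty_tL^2_x}\lesssim\Vert\cdot\Vert_{U^q_\varphi}$ bounds it from some $U^q_\varphi$, $q>2$, with a larger constant. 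Lemma~\ref{lem:UpVpFunctionSpaceProperties}.4 then yields the estimate with $\Vert P_{N_2}u_2\Vert_{V^2_\varphi}$ and a logarithmic factor; repeating with the roles interchanged --- and absorbing the remaining $U^2_\varphi$-norm by $V^2_\varphi$ via Lemma~\ref{lem:UpVpFunctionSpaceProperties}.2 where needed --- places both factors in $V^2_\varphi$ and produces the asserted $\log\langle N_1/N\rangle$.

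\smallskip
\noindent\textbf{Main obstacles.} Three points need care: (i) the free bilinear input is available only on a window of length $N_1^{-1}$, so the transfer argument must be run with subintervals --- which is fine only because the common-refinement pieces never leave $I$; (ii) the third estimate is not a direct consequence of Proposition~\ref{prop:shorttimeBilinearStrichartzEstimate} (which requires $K\ll N$) but of the high-high-to-high interval-slicing of Remark~\ref{remark:EuclideanTransversalityExtension}; and (iii) the precise power inside the logarithm in estimate (2), which depends on the choice of crude high-exponent bound fed into Lemma~\ref{lem:UpVpFunctionSpaceProperties}.4.
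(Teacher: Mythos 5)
Your proposal is correct and follows essentially the same route as the paper, whose proof is only a two-line sketch: the $U^2_{BO}$-bounds come from transferring the shorttime bilinear Strichartz estimates (Proposition \ref{prop:shorttimeBilinearStrichartzEstimate}, together with the frequency-separated $High \times High$ estimate of Remark \ref{remark:EuclideanTransversalityExtension} for the third bound) to $U^2_{BO}$-atoms exactly as you do, and the $V^2_{BO}$-bound comes from Lemma \ref{lem:UpVpFunctionSpaceProperties}, Property 4, with the interpolation details deferred to Hadac--Herr--Koch. The only slip is the aside invoking Lemma \ref{lem:UpVpFunctionSpaceProperties}.2 to ``absorb'' a $U^2$-norm into a $V^2$-norm --- that embedding goes the other way, $\Vert u \Vert_{V^2} \lesssim \Vert u \Vert_{U^2}$ --- but the sequential application of Property 4 in each factor (your ``repeating with the roles interchanged''), using the crude H\"older/Bernstein bound as the second input each time, already places both factors in $V^2_{BO}$ at logarithmic cost, so the argument stands without that remark.
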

\begin{proof}
The $U^2_{BO}$-estimates are a consequence of atomic representations like described above. For the $V^2_{BO}$-estimate use Lemma \ref{lem:UpVpFunctionSpaceProperties}, Property 4. For details on the interpolation see e.g. \cite{HadacHerrKoch2009}. 
\end{proof}
The analogous bilinear estimates for \eqref{eq:ZakharovKuznetsovEquation} are omitted.

\section{Applications of the shorttime bilinear Strichartz estimates to shorttime nonlinear estimates}
\label{section:ShorttimeNonlinearEstimates}
In this section the nonlinearity of certain dispersive PDE is propagated in shorttime norms. We focus on quadratic derivative nonlinearities, that is we will consider
\begin{equation*}
i \partial_t u + \varphi(\nabla/i) u = \partial_x (u^2), \quad u: \mathbb{R} \times \mathbb{T}^n \rightarrow \mathbb{R}
\end{equation*}
in order to keep the exposition simple. In Section \ref{section:HigherOrderGeneralizations} we generalize to higher order nonlinearities $\partial_x (u^k)$, $k \geq 3$.\\
Although one could formulate the proof of the estimates
\begin{equation*}
\begin{split}
\Vert \partial_x (uv) \Vert_{N^s_\varphi} &\lesssim \Vert u \Vert_{F^s_\varphi} \Vert v \Vert_{F^s_{\varphi}} \\
\Vert \partial_x (uv) \Vert_{N^0_\varphi} &\lesssim \Vert u \Vert_{F^0_\varphi} \Vert v \Vert_{F^s_\varphi}
\end{split}
\end{equation*}
in an abstract way, we prefer to prove the bilinear estimates in the two cases of the Benjamin-Ono equation and the Zakharov-Kuznetsov equation. We will see that the arguments parallel each other, although the concretely deployed Strichartz estimates differ. Namely, after performing a Littlewood-Paley decomposition $u = \sum_{N \geq 1} P_N u$ we have to consider the following three interactions in both cases:
\begin{enumerate}
\item[$\bullet$] $High \times Low \rightarrow High$-interaction: Suppose that $N_2 \ll N_1 \sim N$. We have to prove the estimate
\begin{equation*}
\Vert P_N \partial_x (P_{N_1} u P_{N_2} u) \Vert_{N_\varphi} \lesssim N_2^{s-\varepsilon} \Vert P_{N_1} u \Vert_{F_\varphi} \Vert P_{N_2} v \Vert_{F_\varphi}
\end{equation*}
where the $\varepsilon$ is required to carry out the square sum in $K$.
\item[$\bullet$] $High \times High \rightarrow High$-interaction: Suppose that $N_1 \sim N_2 \sim N$. We have to prove the estimate
\begin{equation*}
\Vert P_{N} \partial_x (P_{N_1} u P_{N_2} v) \Vert_{N_\varphi} \lesssim N^s \Vert P_{N_1} u \Vert_{F_\varphi} \Vert P_{N_2} v \Vert_{F_\varphi}
\end{equation*}
\item[$\bullet$] $High \times High \rightarrow Low$-interaction: Suppose that $N_1 \sim N_2 \gg N$. We have to show
\begin{equation*}
\Vert P_N \partial_x (P_{N_1} u P_{N_2} v) \Vert_{N_\varphi} \lesssim N^{-s-\varepsilon} N_1^{2s} \Vert P_{N_1} u \Vert_{F_\varphi} \Vert P_{N_2} u \Vert_{F_\varphi}
\end{equation*}
\end{enumerate}
\subsection{Benjamin-Ono equation}
In case of the Benjamin-Ono equation the derived estimates take the following form.
\begin{proposition}
\label{prop:nonlinearEstimatesBO}
Let $T \in (0,1]$ and $s>0$. Then we find the following estimates to hold:
\begin{eqnarray}
\label{eq:nonlinearEstimateBOI}
\Vert \partial_x (uv) \Vert_{N^s_{BO}(T)} &\lesssim \Vert u \Vert_{F^s_{BO}(T)} \Vert v \Vert_{F^s_{BO}(T)} \\
\label{eq:nonlinearEstimateBOII}
\Vert \partial_x (uv) \Vert_{N^0_{BO}(T)} &\lesssim \Vert u \Vert_{F^0_{BO}(T)} \Vert v \Vert_{F^s_{BO}(T)}
\end{eqnarray}
\end{proposition}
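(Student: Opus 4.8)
The plan is to reduce the proof to the three frequency interactions listed above, and then in each case dualize the $N^s_{BO}$-norm against a $V^2_0$-function and apply the bilinear Strichartz estimate of Proposition \ref{prop:shorttimeBilinearStrichartzEstimate} (specialized to $\varphi_{BO}(\xi) = \xi|\xi|$, so $\alpha = 2$ and the Euclidean window has size $N^{-1}$) together with the transfer principle. First I would perform the Littlewood–Paley decomposition $u = \sum_{N_1} P_{N_1}u$, $v = \sum_{N_2} P_{N_2}v$, fix the output frequency $N$, and split the $N^s_{BO}$-norm into its defining sum over $N$; after projecting onto $P_N$ one has $N_1 \sim N_2 \gtrsim N$ (High $\times$ High $\to$ Low), $N_2 \ll N_1 \sim N$ (High $\times$ Low $\to$ High), or $N_1 \sim N_2 \sim N$ (High $\times$ High $\to$ High). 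In each case I would tile the relevant unit-scale time interval $[0,T]$ by intervals $I$ of length $N^{-1}$ (or, in the High $\times$ High $\to$ Low case, of length $N_1^{-1}$, which is shorter and hence harmless for the $DU^2$-norm), use the duality $\Vert f \Vert_{DU^2_\varphi(I)} = \sup_{\Vert v\Vert_{V^2_0}=1} |\int_I\!\int f\bar v|$, move the derivative $\partial_x$ onto the factor it costs the least (a gain of $N$, $N$, or $N_1$ respectively), and then estimate the resulting space–time integral by Cauchy–Schwarz in $x$ followed by the bilinear estimate $\Vert P_{N_1}w_1\, P_{N_2}w_2\Vert_{L^2(I,L^2)} \lesssim (N_2/N_1)^{?}\dots$ applied to the two high–low separated pieces, converting $U^2_\varphi$-atoms to free solutions via the transfer principle and paying a $\log\langle N_1/N_2\rangle$ loss when passing from $U^2$ to $V^2$.

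The second key step is the bookkeeping of powers: in one dimension $n=1$ the bilinear estimate reads $\Vert P_{N_1}u_1 P_{K}u_2\Vert_{L^2_{t,x}(I)} \lesssim N_1^{-1/2}\Vert P_{N_1}u_1\Vert_{U^2_{BO}(I)}\Vert P_K u_2\Vert_{U^2_{BO}(I)}$, with no positive power of $K$ since $(n-1)/2 = 0$. In the High $\times$ Low $\to$ High case, after summing over the $N\cdot N^{-1}\cdot 1 = N^0$ many subintervals $I$ that tile $[0,T]$ with a Cauchy–Schwarz in the $I$-sum (harmless since $|I|=N^{-1}$ matches the $F^s$-localization), one is left with $N \cdot N^{-1/2} \cdot$ (norms), i.e. a net gain of $N^{-1/2}$ from the Strichartz estimate against a loss of $N^{1/2}$ one would naively fear — so the derivative is exactly absorbed, and the only surplus factor is the $\log\langle N_1/N_2\rangle$ and a sum over the smaller frequency, which one controls by $N_2^{s-\varepsilon}$ as claimed (this is where $s>0$ enters: it gives a summable $N_2^{-\varepsilon}$ after extracting $N_2^s$). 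For the High $\times$ High $\to$ High interaction one uses the interval-slicing argument from Remark \ref{remark:EuclideanTransversalityExtension}: decompose $P_{N_1}, P_{N_2}$ into subintervals $I_1, I_2$ of length $cN$ and, using $\xi_1^2 - \xi_2^2 = (\xi_1-\xi_2)(\xi_1+\xi_2)$, observe that for nontrivial interaction the frequency pieces are genuinely transversal (the output being comparable to $N$ forces $|\xi_1\mp\xi_2|\sim N$), so the bilinear estimate applies at scale $N$; summing over the $O(1)$ relevant pairs of intervals costs nothing. For High $\times$ High $\to$ Low one tiles by the shorter intervals of length $N_1^{-1}$, uses $S_{\gtrsim N_1}$ together with the third bilinear estimate in the transfer-principle proposition, gains $N_1^{-1/2}$, pays $N_1$ for the derivative, and collects $N_1^{-s-\varepsilon} \cdot N_1^{2s}$ once the two $F^s$-norms contribute $N_1^{-2s}$ — consistent with the target bound; the $N$-sum on the low side converges because $N^{-s-\varepsilon}$ is summable for $s>0$.

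Finally, estimate \eqref{eq:nonlinearEstimateBOII} is the same argument run with $s=0$ on the $u$-factor: the only interaction where losing the $N_2^s$-gain on $v$ would hurt is High $\times$ Low $\to$ High with the \emph{low} factor carrying the $F^0$-norm, but then the high factor carries $F^s$ with $s>0$ and the Strichartz gain $N_1^{-1/2}$ against the derivative loss $N_1 \sim N^1$ still closes, with the low-frequency sum absorbed by $N_2^{s-\varepsilon}$ coming from $v \in F^s$; in the High $\times$ High cases both factors live at comparable frequencies so putting $F^0$ on one of them is symmetric and loses nothing. I expect the main obstacle to be precisely the High $\times$ High $\to$ High interaction: there the naive bilinear estimate at scale $N$ needs the transversality that is \emph{not} automatic (two high frequencies can be nearly parallel), so one must invoke the interval-decomposition of Remark \ref{remark:EuclideanTransversalityExtension} and carefully check that nontrivial frequency interaction (the output frequency lying at scale $N$, not lower) forces at least one transversal pair $|\xi_i - \xi_j|\sim N$, and that summing over the resulting $\log^{O(1)}N$ many interval pairs together with the $U^2 \to V^2$ logarithm is still absorbed by the $N_2^{-\varepsilon}$ — which is exactly the place the hypothesis $s>0$ (rather than $s\geq 0$) is used.
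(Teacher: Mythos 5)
Your proposal follows essentially the same route as the paper: Littlewood--Paley reduction to the three interactions, duality with $V_0^2$, the shorttime bilinear Strichartz estimate transferred to $U^2_{BO}$/$V^2_{BO}$ (with the interval-slicing of Remark \ref{remark:EuclideanTransversalityExtension} handling the $High \times High \rightarrow High$ case), tiling into $N_1^{-1}$-intervals with the $N_1/N$ loss for $High \times High \rightarrow Low$, and $s>0$ to absorb the logarithms in the dyadic sums. The only cosmetic differences are that the paper treats $High \times Low \rightarrow High$ via $L^1(I)\hookrightarrow DU^2_{BO}(I)$ plus H\"older in time rather than duality, and your bookkeeping attributes the total loss $N_1 = N\cdot(N_1/N)$ entirely to the derivative (and writes $N_1^{-s-\varepsilon}$ for $N^{-s-\varepsilon}$), which does not affect the outcome.
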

\begin{proof}
In case of $High \times Low \rightarrow High$-interaction we use the embedding $L^1(I) \hookrightarrow DU^2_{BO}(I)$, H\"older in time (recall that $|I|=N^{-1})$ and the bilinear Strichartz estimate to derive
\begin{equation*}
\begin{split}
\Vert P_{N} \partial_x (P_{N_1} u P_{N_2} v) \Vert_{N^0_{BO}} &\lesssim N \Vert P_{N_1} u P_{N_2} v \Vert_{L^1(I;L_x^2)} \\
&\lesssim N^{1/2} \Vert P_{N_1} u P_{N_2} v \Vert_{L^2(I;L_x^2)} \lesssim \Vert P_{N_1} u \Vert_{U^2_{BO}(I)} \Vert P_{N_2} v \Vert_{U^2_{BO}(I)}
\end{split}
\end{equation*}
For $High \times High \rightarrow High$-interaction we use duality to write
\begin{equation*}
\Vert P_N \partial_x (P_{N_1} u P_{N_2} v) \Vert_{N_{BO}} = \sup_{\Vert w \Vert_{V_0^2} = 1} N \left| \int \int P_{N^\prime} w P_{N_1} u P_{N_2} v dx dt \right|
\end{equation*}
Since two factors must be frequency separated of order $N$, we can use a bilinear Strichartz estimate on two factors (say $w$ and $u$) and use the energy estimate on the remaining factor to find
\begin{equation*}
\begin{split}
N \left| \int_I \int P_{N^\prime} w P_{N_1} u P_{N_2} v dx dt \right| &\lesssim N \Vert P_{N^\prime} w P_{N_1} u \Vert_{L^2(I;L^2)} \Vert P_{N_2} v \Vert_{L^2(I;L^2)} \\
&\lesssim \log (N) \Vert w \Vert_{V^2_{BO}(I)} \Vert P_{N_1} u \Vert_{V^2_{BO}} \Vert P_{N_2} v \Vert_{U^2_{BO}}
\end{split}
\end{equation*}
Finally, for $High \times High \rightarrow Low$-interaction we have to partition the interval $I$, $|I| = N^{-1}$ into $N_1^{-1}$ intervals which accounts for a factor of $N_1/N$. Using duality and the bilinear Strichartz estimate we find
\begin{equation*}
\begin{split}
\frac{N_1}{N} \cdot N \int_{I^\prime} \int P_N w P_{N_1} u P_{N_2} v dx dt &\lesssim N_1 \Vert P_N w P_{N_1} u \Vert_{L^2(I^\prime,L_x^2)} \Vert P_{N_2} v \Vert_{L^2(I^\prime,L_x^2)} \\
&\lesssim \log \langle \frac{N_1}{N} \rangle \Vert P_N w \Vert_{V^2_{BO}(I^\prime)} \Vert P_{N_1} u \Vert_{V^2_{BO}(I^\prime)} \Vert P_{N_2} v \Vert_{U^2_{BO}}
\end{split}
\end{equation*}
which yields the claim.
\end{proof}
\subsection{Zakharov-Kuznetsov equation}
\begin{proposition}
\label{prop:nonlinearEstimatesZK}
Let $T \in (0,1]$ and $s>0$. Then we find the following estimates to hold:
\begin{eqnarray}
\label{eq:nonlinearEstimateZKI}
\Vert \partial_x (uv) \Vert_{N^s_{ZK}(T)} \lesssim \Vert u \Vert_{F^s_{ZK}(T)} \Vert v \Vert_{F^s_{ZK}(T)} \\
\label{eq:nonlinearEstimateZKII}
\Vert \partial_x (uv) \Vert_{N^0_{ZK}(T)} \lesssim \Vert u \Vert_{F^0_{ZK}(T)} \Vert v \Vert_{F^s_{ZK}(T)}
\end{eqnarray}
\end{proposition}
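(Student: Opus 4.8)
The plan is to run the argument in exact parallel with the proof of Proposition~\ref{prop:nonlinearEstimatesBO}, replacing the Benjamin--Ono bilinear Strichartz estimate by its Zakharov--Kuznetsov analogue derived in Section~\ref{section:ShorttimeBilinearStrichartzEstimates},
\begin{equation*}
\Vert P_N e^{it \varphi_{ZK}(\nabla/i)} u_1 \, P_K e^{it \varphi_{ZK}(\nabla/i)} u_2 \Vert_{L_t^2([0,N^{-2}],L^2(\mathbb{T}^2))} \lesssim \frac{K^{1/2}}{N} \Vert P_N u_1 \Vert_{L^2} \Vert P_K u_2 \Vert_{L^2},
\end{equation*}
together with its $S_{\gtrsim N}$-refinement from Remark~\ref{remark:EuclideanTransversalityExtension} and the transfer principle (which, via Property~4 of Lemma~\ref{lem:UpVpFunctionSpaceProperties}, also produces the $V^2_{ZK}$-versions). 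Here $\varphi_{ZK}$ is of order $\alpha = 3$ in the sense of \eqref{eq:alphaGroupVelocity}, so the Euclidean window has length $N^{-2}$ and this is the time localization built into $F^s_{ZK}$ and $N^s_{ZK}$. After a Littlewood--Paley decomposition I would treat the $High \times Low \rightarrow High$, $High \times High \rightarrow High$ and $High \times High \rightarrow Low$ interactions and then carry out the square sums; exactly as in the Benjamin--Ono case the weight $N^{2s}$ with $s>0$ absorbs all logarithmic losses and all frequency summations, so no condition beyond $s>0$ is needed for \eqref{eq:nonlinearEstimateZKI} and \eqref{eq:nonlinearEstimateZKII}.

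For the $High \times Low \rightarrow High$ interaction ($N_2 \ll N_1 \sim N$) the two Fourier supports lie in dyadically separated annuli, so the decoupled coordinates are automatically transversal and one argues verbatim as in Proposition~\ref{prop:nonlinearEstimatesBO}: on $I$ with $|I| = N^{-2}$, using $L^1(I) \hookrightarrow DU^2_{ZK}(I)$, H\"older in time (costing $|I|^{1/2} = N^{-1}$) and the estimate above,
\begin{equation*}
\begin{split}
\Vert P_N \partial_x (P_{N_1} u \, P_{N_2} v) \Vert_{N^0_{ZK}(I)} &\lesssim N \Vert P_{N_1} u \, P_{N_2} v \Vert_{L^1(I;L_x^2)} \lesssim \Vert P_{N_1} u \, P_{N_2} v \Vert_{L^2(I;L_x^2)} \\
&\lesssim \frac{N_2^{1/2}}{N} \Vert P_{N_1} u \Vert_{U^2_{ZK}(I)} \Vert P_{N_2} v \Vert_{U^2_{ZK}(I)}.
\end{split}
\end{equation*}
In contrast to the Benjamin--Ono situation this is not balanced but keeps an extra factor $N_2^{1/2}/N$, so the ensuing square sum in $N_2$ against the $N^{2s}$-weight is comfortable for every $s>0$, and this is the only place $s>0$ enters the $High\times Low$ analysis.

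For the two $High \times High$ interactions I would dualize the $DU^2_{ZK}$-norm against $V_0^2$, reducing matters to trilinear integrals $\int \int P_{N'} w \, P_{N_1} u \, P_{N_2} v \, dx \, dt$ with $N' \sim N$. Following Remark~\ref{remark:EuclideanTransversalityExtension} one passes to the rotated coordinates $m$ with $k = Am$, in which $\varphi_{ZK}$ becomes the sum-type phase $\chi(m) = m_1^3 + m_2^3$, and decomposes each of the three projectors into pieces indexed by dyadic intervals for the two decoupled coordinates. On each piece one decoupled coordinate presents a one-dimensional configuration: either a $High \times Low$ one (one of the three intervals much smaller than the others), or, when all three are comparable, a $High \times High$ one in which — since the three decoupled coordinates sum to zero on $A_N$ — some pair $m_i, m_j$ satisfies $|m_i - m_j| \sim N$ and $|m_i + m_j| \sim N$, hence $|3(m_i^2 - m_j^2)| \sim N^2 = N^{\alpha-1}$. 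Either way one applies the corresponding one-dimensional bilinear Strichartz estimate to this transversal pair together with an $L_t^\infty L_x^2$-bound on the third factor; the $\partial_x$ and the H\"older factor $|I|^{1/2}$ are overbalanced, giving a net gain of $N^{-1/2}$. In the $High \times High \rightarrow Low$ case one first slices $I$, $|I| = N^{-2}$, into subintervals of length $N_1^{-2}$ — costing a factor $(N_1/N)^2$ — on which, moreover, $w$ and $u$ are dyadically separated, and runs the same argument on each. Summing over the $\lesssim \log^6 N$ dyadic choices costs only a logarithm, and weighing the resulting coefficients against $N^{2s}$ (and, for the low output, against the gap $N_1 \gg N$) closes the square sums for $s>0$.

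The hard part is precisely this two-dimensional transversality underlying the $High \times High$ interactions: unlike in the Benjamin--Ono case, the phase $\varphi_{ZK}(k_1,k_2) = k_1^3 + 3 k_1 k_2^2$ is not of sum type, so Proposition~\ref{prop:shorttimeBilinearStrichartzEstimate} does not apply to it directly and one must argue in the rotated coordinates, verifying that every nontrivially interacting triple on the annulus $A_N$ is transversal at scale $N^2$ in at least one decoupled coordinate and organizing the attendant case distinction and $\lesssim \log^6 N$ dyadic sub-sums. Everything else — in particular all exponent arithmetic and the square sums — is a routine transcription of the Benjamin--Ono computation in Proposition~\ref{prop:nonlinearEstimatesBO}.
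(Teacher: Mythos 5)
Your proposal is correct and follows essentially the same route as the paper: the same three-way Littlewood--Paley case analysis, the $L^1\hookrightarrow DU^2_{ZK}$ embedding plus H\"older and the shorttime ZK bilinear estimate for $High\times Low\rightarrow High$, duality against $V_0^2$ combined with the rotated sum-type coordinates and interval decomposition of Remark \ref{remark:EuclideanTransversalityExtension} for $High\times High\rightarrow High$, and time-slicing at scale $N_1^{-2}$ (cost $(N_1/N)^2$, with the $V^2$-logarithm) for $High\times High\rightarrow Low$, which is indeed the interaction that sets the threshold $s>0$. The exponent bookkeeping and the conclusion match the paper's proof.
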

\begin{proof}
For $High \times Low \rightarrow High$-interaction we compute like above for $N_2 \ll N \sim N_1$
\begin{equation*}
\begin{split}
\Vert \partial_x P_N (P_{N_1} u P_{N_2} u) \Vert_{N^0_{ZK}} &\lesssim N \Vert P_N (P_{N_1} u P_{N_2} v) \Vert_{DU^2_{ZK}} \lesssim \Vert P_{N_1} u P_{N_2} v \Vert_{L^2([0,N^{-2}],L^2)} \\
&\lesssim \frac{K^{1/2}}{N} \Vert P_{N_1} u \Vert_{U^2_{ZK}} \Vert P_{N_2} v \Vert_{U^2_{ZK}}
\end{split}
\end{equation*}
For $High \times High \rightarrow High$-interaction we find like above using duality and by the remark following the proof of Proposition \ref{prop:shorttimeBilinearStrichartzEstimate} for $N \sim N_1 \sim N_2$
\begin{equation*}
\Vert P_N (P_{N_1} u P_{N_2} u) \Vert_{N_{ZK}} \lesssim \log \langle N \rangle N^{-1/2} \Vert P_{N_1} u \Vert_{U^2_{ZK}} \Vert P_{N_2} u \Vert_{U^2_{ZK}}
\end{equation*}
For $High \times High \rightarrow Low$-interaction we again add localization in time and use bilinear estimates for $V^2$-functions to find like in the proof of Proposition \ref{prop:nonlinearEstimatesBO}
\begin{equation*}
\Vert P_N \partial_x(P_{N_1} u P_{N_2} v) \Vert_{N_{ZK}} \lesssim N \log \langle \frac{N_1}{N} \rangle \left( \frac{N_1}{N} \right)^2 \frac{N^{1/2}}{N_1} (N_1)^{-1} \Vert P_{N_1} u \Vert_{U^2_{ZK}} \Vert P_{N_2} v \Vert_{U^2_{ZK}}
\end{equation*}
Since one has to increase the frequency dependent localization in time giving a factor $(N_1/N)^2$ this estimate is the worst one and gives the threshold $s>0$.
The proof is complete.
\end{proof}
\section{Energy estimates}
\label{section:EnergyEstimates}
Purpose of this section is to propagate the energy norm. For solutions to the original equation this estimate reads as
\begin{equation}
\label{eq:EnergyNormPropagationSolution}
\Vert u \Vert^2_{E^s(T)} \lesssim \Vert u_0 \Vert_{H^s}^2 + \Vert u \Vert^3_{F^s(T)}
\end{equation}
For solutions to the difference equation, that is for $v = u_1-u_2$, where $u_1,u_2$ are solutions to the original equation we prove two estimates in addition to \eqref{eq:EnergyNormPropagationSolution}. The first one leads to Lipschitz dependence of the data-to-solution mapping in $L^2$:
\begin{equation}
\label{eq:L2PropagationDifferences}
\Vert v \Vert^2_{E^0(T)} \lesssim \Vert v(0) \Vert^2_{L^2} + \Vert v \Vert^2_{F^0(T)} ( \Vert u_1 \Vert_{F^s(T)} + \Vert u_2 \Vert_{F^s(T)})
\end{equation}
The second one will lead to non-uniform continuous dependence of the data-to-solution mapping in $H^s$:
\begin{equation}
\label{eq:EnergyNormPropagationDifferences}
\Vert v \Vert^2_{E^s(T)} \lesssim \Vert v \Vert^2_{H^s} + \Vert v \Vert^3_{F^s(T)} + \Vert v \Vert^2_{F^s(T)} \Vert u_2 \Vert_{F^s(T)} + \Vert v \Vert_{F^0(T)} \Vert v \Vert_{F^s(T)} \Vert u_2 \Vert_{F^{2s}(T)}
\end{equation}
In case of the Benjamin-Ono equation we prove the following estimates:
\begin{proposition}
\label{prop:EnergyTransferBenjaminOnoEquation}
Let $T \in (0,1]$ and $s > 1$.
\begin{enumerate}
\item[(a)] For a smooth solution $u$ to \eqref{eq:BenjaminOnoEquation} we find \eqref{eq:EnergyNormPropagationSolution} to hold.
\item[(b)] Let $u_1$, $u_2$ be smooth solutions to \eqref{eq:BenjaminOnoEquation} and $v=u_1-u_2$ be the difference of the two solutions. Then we find \eqref{eq:L2PropagationDifferences} and \eqref{eq:EnergyNormPropagationDifferences} to hold.
\end{enumerate}
\end{proposition}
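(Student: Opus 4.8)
The plan is to start, as in the proof of the nonlinear estimates (Proposition \ref{prop:nonlinearEstimatesBO}), from the energy identity obtained by testing the equation against $u$. For a smooth solution $u$ to \eqref{eq:BenjaminOnoEquation} with $k=2$ and a frequency $N$, projecting to frequencies of size $N$ and integrating in time on $[0,t]$, $t \le T \le 1$, gives
\begin{equation*}
\Vert P_N u(t) \Vert_{L^2}^2 = \Vert P_N u_0 \Vert_{L^2}^2 + 2 \int_0^t \int_{\T} P_N u \, P_N \partial_x(u^2) \, dx \, ds.
\end{equation*}
Multiplying by $N^{2s}$ and summing, part (a) reduces to bounding $\sum_N N^{2s} |\int_0^t \int P_N u \, P_N \partial_x(u^2)|$ by $\Vert u \Vert_{F^s(T)}^3$. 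After a Littlewood--Paley decomposition $u^2 = \sum_{N_1,N_2} P_{N_1} u \, P_{N_2} u$ one is left, by symmetry, with the three frequency regimes listed in Section \ref{section:ShorttimeNonlinearEstimates}: $High \times Low \to High$, $High \times High \to High$, and $High \times High \to Low$. The crucial structural point, exactly as in the classical Bona--Smith method, is that in the $High \times Low \to High$ regime one integrates by parts to move the derivative off the high-frequency factor; the leading term cancels by the antisymmetry coming from $\int P_N u \, \partial_x(P_N u \cdot P_{<N}u) \sim -\tfrac12 \int (\partial_x P_{<N} u)(P_N u)^2$, so only a commutator term survives and the net derivative count on the dangerous high factor drops from one to zero.

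For each surviving multilinear expression I would then subdivide $[0,T]$ into intervals $I$ of length $N^{-1}$ (there are $\lesssim N$ of them, and the loss $N$ is what the half-derivative gain in the bilinear Strichartz estimate must beat), apply Hölder in time with $|I| = N^{-1}$, and use the shorttime bilinear Strichartz estimate of Proposition \ref{prop:shorttimeBilinearStrichartzEstimate} (in its $V^2$-transferred form from the transfer-principle Proposition) on the pair of frequency-separated factors, together with the trivial $L^\infty_t L^2_x$ bound on the remaining factor controlled by $\Vert \cdot \Vert_{F^s(T)}$ via the embedding into $E^s(T)$. Summing over the $\lesssim N$ subintervals with the $N^{-1/2}$ gain from each bilinear estimate, and over the dyadic frequencies, one obtains $\Vert u \Vert_{F^s(T)}^3$ for $s>1$; the restriction $s>1$ is exactly the logarithmic loss from the square sum over frequencies noted in the introduction. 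The $High \times High \to Low$ term needs the extra interval subdivision to scale $N^{-1}$ and produces the worst power counting, but since here the output frequency is low this costs $(N_1/N)$ and is still summable for $s>1$.

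For part (b), the difference $v = u_1 - u_2$ solves
\begin{equation*}
\partial_t v + \mathcal H \partial_{xx} v = \partial_x\bigl( (u_1+u_2) v \bigr) \cdot \tfrac12 \cdot 2 = \partial_x\bigl( v \, \tfrac{u_1+u_2}{1} \cdot \tfrac12 \bigr),
\end{equation*}
more precisely $\partial_t v + \mathcal H\partial_{xx} v = \tfrac12 \partial_x\bigl((u_1+u_2)v\bigr)$, a \emph{bilinear} equation in $v$ with coefficients $u_1,u_2$. Testing against $v$ and against $\langle \nabla\rangle^{2s} v$ gives the $E^0$ and $E^s$ estimates respectively. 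The $E^0$-estimate \eqref{eq:L2PropagationDifferences} is the simplest: the energy identity produces $\int v \, \partial_x((u_1+u_2)v)$, one integrates by parts in the $High \times Low \to High$ piece (the high frequency being carried by one $v$-factor while $u_1+u_2$ is low) to get $-\tfrac12\int \partial_x(u_1+u_2)\, v^2$ plus a commutator, and then applies the bilinear Strichartz estimate to a $v$-factor paired with a $u_i$-factor, leaving the other $v$ in $F^0(T)$; this yields the stated bound with the $\Vert v\Vert_{F^0(T)}^2(\Vert u_1\Vert_{F^s(T)}+\Vert u_2\Vert_{F^s(T)})$ shape. The $E^s$-estimate \eqref{eq:EnergyNormPropagationDifferences} is the delicate one: after commuting $\langle\nabla\rangle^s$ past the nonlinearity, the terms where all derivatives land on $v$ give $\Vert v\Vert_{F^s(T)}^3 + \Vert v\Vert_{F^s(T)}^2\Vert u_2\Vert_{F^s(T)}$ by the same scheme, but the term where the $2s$ derivatives concentrate on a single $u_2$-factor (with both $v$-factors at low/comparable frequency) must be estimated by putting one $v$ in $F^0(T)$, one $v$ in $F^s(T)$, and the high $u_2$-factor in $F^{2s}(T)$ — this is precisely the last summand of \eqref{eq:EnergyNormPropagationDifferences}, and it is the source of the need to propagate $u_2$ at the doubled regularity $2s$.

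I expect the main obstacle to be the bookkeeping in the $E^s$-difference estimate: one must carefully track, after the Littlewood--Paley decomposition and the $\langle\nabla\rangle^s$-commutation, exactly which factor absorbs the derivatives and arrange the three factors so that two frequency-separated ones feed the bilinear Strichartz estimate while the third — chosen as the high one when the derivatives have piled onto $u_2$ — is measured in $F^{2s}(T)$; getting the commutator estimate in the $High \times Low \to High$ regime to produce a genuine gain (rather than merely breaking even) is what forces $s>1$ and is the technically load-bearing step.
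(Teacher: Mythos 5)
Your proposal follows essentially the same route as the paper: the frequency-localized energy identity, the Littlewood--Paley trichotomy with integration by parts plus a commutator bound in the $High \times Low \rightarrow High$ regime, time subdivision into intervals of length $N^{-1}$ combined with the shorttime bilinear Strichartz estimate and H\"older in time, and for \eqref{eq:EnergyNormPropagationDifferences} the identification of the non-symmetric term (derivative and high frequency on $u_2$, low frequency on $v$) that cannot be integrated by parts and is absorbed by placing $u_2$ in $F^{2s}(T)$. The only cosmetic differences are that the paper works directly with $\Vert P_N v(t)\Vert_{L^2}^2$ and the splitting $\partial_x(v^2)+\partial_x(v u_2)$ rather than testing against $\langle\nabla\rangle^{2s}v$, and that the $s>1$ threshold arises from the dyadic summations (notably $High \times High \rightarrow Low$ and the non-integrable-by-parts difference term) rather than from the commutator gain itself, but these do not change the argument.
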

The building blocks to prove Proposition \ref{prop:EnergyTransferBenjaminOnoEquation} will be the estimates proven in the following lemma.
\begin{lemma}
Let $T \in (0,1]$ and $N_2 \leq N_1 \sim N$. Then we find the following estimate to hold:
\begin{equation}
\label{eq:FrequencyLocalizedEnergyTransferBenjaminOno}
\int_0^T ds \int dx P_N u_1 P_{N_1} u_2 P_{N_2} u_3 \lesssim \Vert P_N u_1 \Vert_{F^0} \Vert P_{N_1} u_2 \Vert_{F^0} \Vert P_{N_2} u_3 \Vert_{F^0}
\end{equation}
\end{lemma}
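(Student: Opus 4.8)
The plan is to estimate the trilinear expression
\begin{equation*}
\int_0^T ds \int dx\; P_N u_1\, P_{N_1} u_2\, P_{N_2} u_3
\end{equation*}
by first splitting the time interval $[0,T]$ into $\sim N$ subintervals $I$ of length $|I| = N^{-1}$ (the Euclidean window for frequency $N$), so that on each $I$ we may use the bilinear Strichartz estimate of Proposition~\ref{prop:shorttimeBilinearStrichartzEstimate} together with the transfer principle. Since there are $\sim N$ such intervals, I must gain a factor $N^{-1}$ from the bilinear estimates to compensate. Because $N_2 \le N_1 \sim N$, the two high frequencies $N$ and $N_1$ are comparable and there is no derivative to absorb here — the integrand has no $\partial_x$ — so the task is purely to distribute three $L^2$-based norms.

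On a fixed interval $I$ with $|I| = N^{-1}$, I would apply Cauchy-Schwarz in $(t,x)$ to pair the two highest-frequency factors in $L^2_{t,x}(I)$ and leave the lowest-frequency factor $P_{N_2} u_3$ in $L^\infty_t L^2_x$; that is,
\begin{equation*}
\left| \int_I \int P_N u_1\, P_{N_1} u_2\, P_{N_2} u_3 \right| \lesssim \Vert P_N u_1\, P_{N_1} u_2 \Vert_{L^2(I;L^2)} \, \Vert P_{N_2} u_3 \Vert_{L^\infty(I;L^2)}.
\end{equation*}
Here one must be slightly careful: if $N_1 \sim N_2 \sim N$ (i.e.\ all three comparable), one should instead pick the pair that is genuinely frequency-separated — by the nontrivial-interaction/interval-slicing discussion in Remark~\ref{remark:EuclideanTransversalityExtension}, after decomposing $P_N$, $P_{N_1}$ into intervals of length $cN$ there is always a transversal pair amenable to the bilinear estimate, at the cost of a harmless $\log$ factor from summing the $O(1)$-many dyadic subpieces. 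In either case the bilinear Strichartz estimate (in its $U^2$ form from the transfer principle, or the $S_{\gtrsim N}$ variant for the comparable case) gives
\begin{equation*}
\Vert P_N u_1\, P_{N_1} u_2 \Vert_{L^2(I;L^2)} \lesssim N^{-1/2} \Vert P_N u_1 \Vert_{U^2_{BO}(I)} \Vert P_{N_1} u_2 \Vert_{U^2_{BO}(I)},
\end{equation*}
and $\Vert P_{N_2} u_3 \Vert_{L^\infty_t L^2_x} \lesssim \Vert P_{N_2} u_3 \Vert_{U^2_{BO}(I)}$.

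Next I would sum over the $\sim N$ intervals $I$. Combining the bound $N^{-1/2}$ from the bilinear estimate with the $L^\infty_t L^2_x$ factor, each interval contributes $N^{-1/2}$ times a product of three $U^2$-norms on $I$; using Cauchy-Schwarz in the interval index together with the square-summability built into the $F^0$-norm (for the two high-frequency factors) and $\sup_I$ control for the low-frequency factor, the $N$ intervals produce a factor $N \cdot N^{-1/2} \cdot N^{-1/2} = 1$ — this is exactly where the two half-powers from splitting into $U^2/V^2$ via Hölder in time and the one half-power from the bilinear gain balance the number of intervals. Thus one arrives at
\begin{equation*}
\int_0^T ds \int dx\; P_N u_1\, P_{N_1} u_2\, P_{N_2} u_3 \lesssim \Vert P_N u_1 \Vert_{F^0} \Vert P_{N_1} u_2 \Vert_{F^0} \Vert P_{N_2} u_3 \Vert_{F^0},
\end{equation*}
as claimed. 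The main obstacle I anticipate is the bookkeeping in the case $N \sim N_1 \sim N_2$: one must invoke the interval-slicing argument to locate a transversal pair and check that the extra $\log$ and the $O(1)$ sum over subpieces do not spoil the estimate — but since there is no derivative loss to recover here (unlike in the nonlinear estimates of Section~\ref{section:ShorttimeNonlinearEstimates}), even a logarithmic loss is acceptable, so this case is technical rather than genuinely delicate.
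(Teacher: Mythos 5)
There is a genuine gap, and it sits in the main case $N_2 \ll N \sim N_1$: you paired the wrong two factors. Since the $x$-integral forces $\xi_1+\xi_2+\xi_3=0$ with $|\xi_3|\lesssim N_2\ll N$, the two high frequencies satisfy $\xi_1\approx -\xi_2$, hence $\big||\xi_1|-|\xi_2|\big|=|\xi_3|\lesssim N_2$. For the Benjamin--Ono dispersion the group velocity is $2|\xi|$, so transversality is governed by $\big||\xi_1|-|\xi_2|\big|$, not by $|\xi_1-\xi_2|\sim N$; the pair $(P_N u_1,P_{N_1}u_2)$ is thus precisely the \emph{non}-transversal pair. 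The bound $\Vert P_N u_1\,P_{N_1}u_2\Vert_{L^2(I;L^2)}\lesssim N^{-1/2}\Vert P_Nu_1\Vert_{U^2_{BO}(I)}\Vert P_{N_1}u_2\Vert_{U^2_{BO}(I)}$ you invoke is not covered by Proposition \ref{prop:shorttimeBilinearStrichartzEstimate} (which requires $K\ll N$, or the separation $S_{\gtrsim}$ of Remark \ref{remark:EuclideanTransversalityExtension} in the comparable case), and it is in fact false: take free solutions with $\hat{u}_1=\chi_{[N,N+A]}$, $\hat{u}_2=\chi_{[-N-A,-N]}$ and $1\ll A\ll N^{1/2}$; over $|t|\le N^{-1}$ the product phases do not vary, the left-hand side is of size $N^{-1/2}A^{3/2}$ while the claimed right-hand side is $N^{-1/2}A$, so the inequality fails by $A^{1/2}$. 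This is exactly the resonant $High\times High\rightarrow Low$ feature of BO. The transversal pair is high--low: one must pair $P_N u_1$ (or $P_{N_1}u_2$) with $P_{N_2}u_3$ in $L^2(I;L^2)$, which is what the paper's proof does (the grouping displayed there has typographical slips in the indices, but the quoted gain $N^{-1}$ per interval is the one produced by the high--low pairing plus H\"older in time). Your interval-slicing remark is only needed, and only correct, in the residual case $N_2\sim N$.

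Second, your bookkeeping over the $\lesssim NT$ intervals does not close as written. The $F^0$-norm is defined with $\sup_{|I|=N^{-1}}$, not with an $\ell^2$-sum over the intervals of a partition, so there is no ``square-summability in the interval index'' from which to extract an extra $N^{-1/2}$; with your choice of $L^\infty_t L^2_x$ for the third factor each interval contributes only $N^{-1/2}$, and summing gives $N\cdot N^{-1/2}=N^{1/2}$, a loss. The missing half power comes from H\"older in time on the remaining (unpaired) factor over the short interval: estimate it in $L^2(I;L^2)\le |I|^{1/2}\Vert\cdot\Vert_{L^\infty(I;L^2)}\lesssim N^{-1/2}\Vert\cdot\Vert_{U^2_{BO}(I)}$, so that each interval contributes $N^{-1/2}\cdot N^{-1/2}=N^{-1}$, exactly compensating the $\lesssim NT$ intervals. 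With the high--low pairing and this H\"older step your argument becomes the paper's proof.
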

\begin{proof}
The key ingredient is again the shorttime bilinear Strichartz estimate. First, consider the case $N_2 \ll N$. After breaking $[0,T]$ into $\lesssim NT$ intervals $I$ of size $N^{-1}$ we have to estimate
\begin{equation*}
\begin{split}
\int_I ds \int dx P_N u_1 P_{N_1} u_2 P_{N_2} u_3 &\leq \Vert P_N u_1 P_{N_1} u_2 \Vert_{L^2(I;L^2)} \Vert P_{N_2} u_3 \Vert_{L^2(I;L^2)} \\
&\lesssim N^{-1} \Vert P_N u_1 \Vert_{U^2_{BO}(I;L^2)} \Vert P_{N_2} u_2 \Vert_{U^2_{BO}(I;L^2)} \Vert P_{N} u_3 \Vert_{U^2_{BO}(I;L^2)}
\end{split}
\end{equation*}
Since splitting the time interval accounts for a factor of at most $N$, the proof is complete. In case $N_2 \sim N$ we can still use a shorttime bilinear Strichartz estimate following Remark \ref{remark:EuclideanTransversalityExtension}.
\end{proof}
We show Proposition \ref{prop:EnergyTransferBenjaminOnoEquation}:
\begin{proof}[Proof of Proposition \ref{prop:EnergyTransferBenjaminOnoEquation}]
First, we show \eqref{eq:EnergyNormPropagationSolution}. One has to analyze $\sup_{t \in [0,T]} \Vert P_N u(t) \Vert^2_{L^2}$ in order to conclude the estimate of the $E^s$-norm after carrying out the sum over $N$ with weight $N^{2s}$.\\
The fundamental theorem of calculus yields
\begin{equation*}
\Vert P_N u(t) \Vert^2_{L^2} = \Vert P_N u(0) \Vert^2_{L^2} + 2 \int_0^t ds \int dx P_N u P_N (\partial_x (u u ))
\end{equation*}
First, consider $High \times Low \rightarrow High$-interaction. That means we estimate
\begin{equation*}
\int_0^t ds \int dx P_N u P_N [ \partial_x(u P_{N_2} u)],
\end{equation*}
where $N_2 \ll N$. In case the derivative hits the high frequency factor we integrate by parts in order to derive a favourable expression: Write
\begin{equation*}
\begin{split}
\int_0^t ds \int dx P_N u P_N (\partial_x u P_{N_2} u) &= \int_0^t ds \int dx P_N u P_N(\partial_x u) P_{N_2} u dx \\
 &+ \int_0^t ds \int dx P_N u [ P_N (\partial_x u P_{N_2} u) - P_N (\partial_x u) P_{N_2} u] dx
 \end{split}
\end{equation*}
After integration by parts the first expression is dominated by $|\int ds \int dx P_N u P_N u P_{N_2}(\partial_x u)|$.\\
For the second one consider the squared bracket in momentum space:
\begin{equation*}
\begin{split}
&\chi_N(\xi_1+\xi_2) (-i \xi_1) \hat{u}(\xi_1) \chi_{N_2}(\xi_2) \hat{u}(\xi_2) - \chi_N(\xi_1) (-i\xi_1) \hat{u}(\xi_1) \chi_{N_2}(\xi_2) \hat{u}(\xi_2) \\
&= \{ \chi_N(\xi_1+\xi_2) - \chi_N(\xi_1) \} (-i\xi_1) \hat{u}(\xi_1) \chi_{N_2}(\xi_2) \hat{u}(\xi_2)
\end{split}
\end{equation*}
The mean value theorem shows that the expression in the squared brackets is of order at most $N_2/N$. Since it is smooth and admits an off-diagonal continuation, we can expand it into a rapidly converging Fourier series and changing back to position space we find
\begin{equation*}
\frac{N_2}{N} \int_0^t ds \int dx P_N u \tilde{P}_N (\partial_x u P_{N_2} u)
\end{equation*}
which implies that effectively we are again dealing with an expression of the form
\begin{equation*}
\sum_{N_1 \sim N} \left| \int ds \int dx P_N u P_{N_1} u P_{N_2}(\partial_x u) \right|
\end{equation*}
By virtue of \eqref{eq:FrequencyLocalizedEnergyTransferBenjaminOno} and the Cauchy-Schwarz inequality one finds
\begin{equation*}
\begin{split}
&\sum_{N \geq 1} N^{2s} \sum_{N_1 \sim N} \sum_{1 \leq N_2 \leq N} \left| \int_0^t ds \int dx P_N u P_{N_1} u P_{N_2}(\partial_x u) \right| \\
&\lesssim \sum_{N \geq 1} N^{2s} \sum_{N_1 \sim N} \sum_{1 \leq N_2 \leq N} N_2 \Vert P_N u \Vert_{F^0} \Vert P_{N_1} u \Vert_{F^0} \Vert P_{N_2} u \Vert_{F^0} \lesssim \Vert u \Vert^3_{F^s(T)}
\end{split}
\end{equation*}
In the above estimate we did not distinguish between $High \times Low \rightarrow High$-interaction or $High \times High \rightarrow High$-interaction. When dealing with $High \times High \rightarrow High$-interaction there is no point in integrating by parts.\\
In case of $High \times High \rightarrow Low$-interaction we again do not integrate by parts, but simply use \eqref{eq:FrequencyLocalizedEnergyTransferBenjaminOno} to find
\begin{equation*}
\begin{split}
&\sum_{N \geq 1} N^{2s} \sum_{N_1 \sim N_2 \gg N} \left| \int ds \int dx P_N u P_N \partial_x (P_{N_1} u P_{N_2} u) \right| \\
&\lesssim \sum_{N \geq 1} N^{2s+1} \sum_{N_1 \sim N_2 \gg N} \Vert P_N u \Vert_{F^0(T)} \Vert P_{N_1} u \Vert_{F^0(T)} \Vert P_{N_2} u \Vert_{F^0(T)} \lesssim \Vert u \Vert^3_{F^s(T)}
\end{split}
\end{equation*}
provided that $s > 1$. The proof of estimate \eqref{eq:EnergyNormPropagationSolution} is complete.\\
Next, we turn to estimate \eqref{eq:L2PropagationDifferences}: Due to the reduced symmetry one can not always integrate by parts like above. Again, we invoke the fundamental theorem of calculus to write
\begin{equation*}
\Vert P_N v(t) \Vert^2_{L^2} = \Vert P_N v(0) \Vert^2_{L^2} + 2 \int_0^t ds \int dx P_N v (P_N(\partial_x (v (u_1 + u_2)))) 
\end{equation*}
First, consider $High \times Low \rightarrow High$-interaction. In case the high frequency is on the difference solution, that means we are considering the expression 
\begin{equation*}
\int_0^t ds \int dx P_N v P_N(\partial_x(v(P_K (u_1 + u_2)))
\end{equation*}
we can argue like above with integration by parts and the commutator estimate to conclude
\begin{equation*}
\begin{split}
&\sum_{N \geq 1} \sum_{1 \leq N_2 \ll N} \sum_{N_1 \sim N} \Vert P_N v \Vert_{F^0} \Vert P_{N_1} v \Vert_{F^0} N_2 \Vert P_{N_2} u \Vert_{F^0} \\
&\lesssim \sum_{N \geq 1 } \sum_{N_1 \sim N} \Vert P_N v \Vert_{F^0(T)} \Vert P_{N_1} v \Vert_{F^0(T)} \Vert u \Vert_{F^s(T)} \lesssim \Vert v \Vert^2_{F^0} \Vert u \Vert_{F^s}
\end{split}
\end{equation*}
However, when considering the expression $\int ds \int dx P_N v P_N(\partial_x (P_{N_2} v \cdot u))$ we can not integrate by parts, but still
\begin{equation*}
\begin{split}
&\sum_{N \geq 1} \sum_{1 \leq N_2 \ll N} \sum_{N_1 \sim N} N \Vert P_N v \Vert_{F^0} \Vert P_{N_1} u \Vert_{F^0} \Vert P_{N_2} v \Vert_{F^0} \\
&\lesssim \Vert v \Vert^2_{F^0(T)} \Vert u \Vert_{F^s(T)}
\end{split}
\end{equation*}
provided that $s >1$.
In case of $High \times High \rightarrow High$- and $High \times High \rightarrow Low$-interaction the argument from the proof of \eqref{eq:EnergyNormPropagationSolution} applies without modification and yields the desired estimate. This completes the proof of \eqref{eq:L2PropagationDifferences}.\\
We turn to the proof of \eqref{eq:EnergyNormPropagationDifferences}. For this purpose rewrite the equation satisfied by $v = u_1 - u_2$ as
\begin{equation*}
\partial_t v + \mathcal{H} \partial_{xx} v = \partial_x (v^2) + \partial_x (v u_2)
\end{equation*}
Using the same strategy like above we have to focus on $High \times Low \rightarrow High$-interaction in the expression $\partial_x (v u_2)$. More precisely, we have to carry out the estimate
\begin{equation*}
\begin{split}
&\sum_{N \geq 1} N^{2s} \sum_{1 \leq N_2 \ll N} \int_0^t ds \int dx P_N v P_N(\partial_x u_2 P_{N_2} v) \\
&\lesssim \sum_{N \geq 1 } \sum_{1 \leq N_2 \ll N} \sum_{N \sim N_1} N^{2s+1} \Vert P_N v \Vert_{F^0} \Vert P_{N_1} u_2 \Vert_{F^0} \Vert P_{N_2} v \Vert_{F^0} \\
&\lesssim \Vert v \Vert_{F^0} \Vert v \Vert_{F^s} \Vert u_2 \Vert_{F^{2s}}
\end{split}
\end{equation*}
The remaining cases can be treated like above. The proof is complete.
\end{proof}
We record the corresponding result for the Zakharov-Kuznetsov equation.
\begin{proposition}
Let $T \in (0,1]$ and $s > 3/2$.
\begin{enumerate}
\item[(a)] Suppose that $u$ is a smooth solution to \eqref{eq:ZakharovKuznetsovEquation}. Then, we find \eqref{eq:EnergyNormPropagationSolution} to hold.
\item[(b)] Suppose that $u_1$ and $u_2$ are smooth solutions to \eqref{eq:ZakharovKuznetsovEquation} and let $v = u_1 - u_2$. Then, we find \eqref{eq:L2PropagationDifferences} and \eqref{eq:EnergyNormPropagationDifferences} to hold.
\end{enumerate}
\end{proposition}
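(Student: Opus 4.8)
The plan is to run the proof of Proposition~\ref{prop:EnergyTransferBenjaminOnoEquation} essentially verbatim, with every use of the Benjamin--Ono short-time bilinear Strichartz estimate replaced by its Zakharov--Kuznetsov counterpart
\begin{equation*}
\Vert P_N e^{it\varphi_{ZK}(\nabla/i)}u_1\;P_K e^{it\varphi_{ZK}(\nabla/i)}u_2\Vert_{L^2_t([0,N^{-2}],L^2(\mathbb{T}^2))}\lesssim \frac{K^{1/2}}{N}\Vert P_N u_1\Vert_{L^2}\Vert P_K u_2\Vert_{L^2}\qquad(K\ll N)
\end{equation*}
obtained in Section~\ref{section:ShorttimeBilinearStrichartzEstimates} through the symmetrizing change of variables, together with its $High \times High$ and interval-slicing variants from Remark~\ref{remark:EuclideanTransversalityExtension}. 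Two features distinguish this from the Benjamin--Ono situation: the admissible window is $N^{-2}$ instead of $N^{-1}$ (as $\alpha=3$), and the estimate is one power weaker ($K^{1/2}/N$ rather than $N^{-1/2}$) on account of the second spatial dimension. Accordingly, the first step is to establish the Zakharov--Kuznetsov analogue of the frequency-localised trilinear estimate~\eqref{eq:FrequencyLocalizedEnergyTransferBenjaminOno}: for $N_2\le N_1\sim N$ and $T\in(0,1]$,
\begin{equation*}
\int_0^T\! ds\int\! dx\, P_N u_1\, P_{N_1}u_2\, P_{N_2}u_3\;\lesssim\; N_2^{1/2}\,\Vert P_N u_1\Vert_{F^0_{ZK}}\Vert P_{N_1}u_2\Vert_{F^0_{ZK}}\Vert P_{N_2}u_3\Vert_{F^0_{ZK}}.
\end{equation*}
I would prove this by breaking $[0,T]$ into $\lesssim N^2$ intervals $I$ of length $N^{-2}$; on each $I$ one uses Cauchy--Schwarz to split off the frequency-separated pair, estimates that pair by the Zakharov--Kuznetsov bilinear estimate above (gaining $N_2^{1/2}/N$) and bounds the remaining factor trivially by $|I|^{1/2}\Vert\cdot\Vert_{L^\infty_t L^2}$ (gaining $N^{-1}$), so that each interval contributes $N_2^{1/2}N^{-2}$ times three $U^2_{ZK}$-norms; taking the supremum of two of these norms over the intervals, paying the number $\lesssim N^2$ of intervals for the third, and invoking the transfer principle together with the monotonicity of the $U^2$-norm under interval restriction yields the claim. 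The borderline case $N_2\sim N_1$ is covered by the transversality/interval-slicing argument of Remark~\ref{remark:EuclideanTransversalityExtension}.

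With this lemma at hand, part (a) follows the template of Proposition~\ref{prop:EnergyTransferBenjaminOnoEquation}(a): the fundamental theorem of calculus gives $\Vert P_N u(t)\Vert_{L^2}^2=\Vert P_N u(0)\Vert_{L^2}^2+2\int_0^t\! ds\int\! dx\,P_N u\,P_N(u\partial_x u)$, and one decomposes into the three interaction regimes. In the $High \times Low \rightarrow High$ regime with the derivative on the high factor I would integrate by parts in $x$ and peel off the commutator symbol $\chi_N(\xi_1+\xi_2)-\chi_N(\xi_1)$, which is $O(N_2/N)$, smooth, and admits an off-diagonal continuation, hence expands into a rapidly convergent Fourier series; this reduces matters to expressions of the form $\sum_{N_1\sim N}\bigl|\int\!\int P_N u\,P_{N_1}u\,P_{N_2}(\partial_x u)\bigr|$, to which the trilinear lemma applies with total gain $N_2\cdot N_2^{1/2}$, and the square sum over $N_2\le N$ against the $H^s$-weight closes by the $\ell^1$--$\ell^\infty$ estimate $\sum_{N_2}N_2^{3/2-s}\lesssim 1$, i.e.\ for $s>3/2$. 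The $High \times High \rightarrow High$ regime requires no integration by parts, the lemma (in its $N_2\sim N$ form) providing a gain $N\cdot N^{1/2}$, again summable for $s>3/2$. The $High \times High \rightarrow Low$ regime $N_1\sim N_2\gg N$ is the decisive one: one does not integrate by parts, one works at the finest scale $N_1^{-2}$ — so there are $\sim N_1^2$ subintervals of $[0,T]$ — and one balances this multiplicity against the per-interval gain of the two-dimensional bilinear estimate; carrying out the resulting sum $\sum_N N^{3/2-s}$ is precisely where the threshold $s>3/2$ is attained, mirroring the mechanism behind the worst case of Proposition~\ref{prop:nonlinearEstimatesZK}.

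For part (b) one repeats the computation for $v=u_1-u_2$, which satisfies $\partial_t v+\partial_{xxx}v+3\partial_x\partial_{yy}v=\partial_x(v^2)+\partial_x(vu_2)$ up to harmless numerical constants. The $\partial_x(v^2)$ contribution is handled as in (a), producing the $\Vert v\Vert_{F^s}^3$ term of~\eqref{eq:EnergyNormPropagationDifferences} and, upon bounding one factor in $F^0$, the $\Vert v\Vert_{F^0}^2(\Vert u_1\Vert_{F^s}+\Vert u_2\Vert_{F^s})$ term of~\eqref{eq:L2PropagationDifferences}; the $High \times High$ pieces of $\partial_x(vu_2)$ give the symmetric term $\Vert v\Vert_{F^s}^2\Vert u_2\Vert_{F^s}$. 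The genuinely new point is the $High \times Low \rightarrow High$ piece of $\partial_x(vu_2)$ in which the high frequency sits on $u_2$ and the low one on $v$: the product is no longer symmetric, integration by parts is unavailable, and one must absorb the full derivative $\sim N$; feeding this into the trilinear lemma and allocating the frequency weights then produces exactly the term $\Vert v\Vert_{F^0}\Vert v\Vert_{F^s}\Vert u_2\Vert_{F^{2s}}$ of~\eqref{eq:EnergyNormPropagationDifferences}, the elevated regularity $F^{2s}$ on $u_2$ paying for the uncompensated derivative while the low-regularity factor $\Vert v\Vert_{F^0}$ supplies the gap. All remaining interactions reduce to those of (a).

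The main obstacle is the $High \times High \rightarrow Low$ bookkeeping: since the two-dimensional bilinear estimate is one derivative weaker than on $\mathbb{T}$, the budget is tight and $s=3/2$ is sharp for this method. A secondary technical point, as already in Proposition~\ref{prop:EnergyTransferBenjaminOnoEquation}, is to ensure that the transfer $U^2_{ZK}\to F^0_{ZK}$ and the summation over the (numerous) subintervals of $[0,T]$ cost only the logarithmic loss hidden in the strict inequality $s>3/2$; for this one uses the embedding and interpolation properties collected in Lemma~\ref{lem:UpVpFunctionSpaceProperties}.
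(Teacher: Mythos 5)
Your proposal is correct and takes essentially the same route as the paper: the paper's own proof of this proposition consists of the single remark that one reruns the Benjamin--Ono energy argument of Proposition \ref{prop:EnergyTransferBenjaminOnoEquation} with the weaker Zakharov--Kuznetsov shorttime bilinear estimate, which is exactly what you do. Your frequency-localized trilinear bound with loss $N_2^{1/2}$ (from $\lesssim N^2$ intervals of length $N^{-2}$, the bilinear gain $K^{1/2}/N$, and H\"older in time) and the resulting summations of the form $\sum_N N^{3/2-s}$ faithfully reproduce the threshold $s>3/2$, so the writeup is a correct expansion of the paper's one-line argument.
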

The proof follows the argument from above, but the worse shorttime bilinear Strichartz estimate accounts for worse regularity.
\section{A priori estimates and continuity of the data-to-solution mapping}
\label{section:Conclusion}
This section is devoted to the proof of a priori estimates and the existence and continuity of the data-to-solution mapping for quadratic derivative nonlinearities. For most of the time we confine ourselves to the analysis of the evolution of small initial data in the unit time interval. At the end of the section we generalize the argument to large initial data. The argument is standard by now (cf. \cite{IonescuKenigTataru2008}), but included for the sake of completeness. We only demonstrate it for the Benjamin-Ono equation.\\
We start with proving a priori estimates for periodic solutions to the Benjamin-Ono equation. The set of estimates we deploy to prove a priori estimates for periodic solutions to the Benjamin-Ono equation provided that $s > 1$ are
\begin{equation*}
\left\{\begin{array}{cl}
\Vert u \Vert_{F^s(T)} &\lesssim \Vert u \Vert_{E^s(T)} + \Vert \partial_x (u^2) \Vert_{N^s(T)} \\
\Vert u \partial_x u \Vert_{N^s(T)} &\lesssim \Vert u \Vert_{F^s(T)}^2 \\
\Vert u \Vert^2_{E^s(T)} &\lesssim \Vert u_0 \Vert_{H^s}^2 + \Vert u \Vert_{F^s(T)}^3 \end{array} \right.
\end{equation*}
Putting the estimates together one finds
\begin{equation*}
\Vert u \Vert^2_{F^s(T)} \lesssim \Vert u_0 \Vert_{H^s}^2 + \Vert u \Vert^3_{F^s(T)} + \Vert u \Vert_{F^s(T)}^4
\end{equation*}
Starting with small initial data $\Vert u_0 \Vert_{H^s} \leq \varepsilon$ and due to $\lim_{T \rightarrow 0} \Vert u \Vert_{E^s(T)} \leq 2 \Vert u_0 \Vert_{H^s}$ the claim follows from a standard bootstrap argument. The details are omitted.\\
Due to $F^s(T) \hookrightarrow L_T^\infty H^s$ this yields a priori estimates for the Benjamin-Ono equation provided that $s > 1$ and the same argument yields a priori estimates for the Zakharov-Kuznetsov equation for $s > 3/2$.\\
We turn to continuity of the data-to-solution mapping. For this purpose we will use a variant of the Bona-Smith approximation (cf. \cite{BonaSmith1975}). The key idea is to show that the solutions coming from frequency truncated initial data provide good approximations. The approximations are not uniform. Therefore, the argument does not yield uniform continuity of the data-to-solution mapping which one can not expect for equations with quadratic derivative nonlinearity (cf. \cite{KochTzvetkov2005, LinaresPantheeRobertTzvetkov2018}).\footnote{Notably, the data-to-solution mapping for the Benjamin-Ono equation posed on $\mathbb{T}$ for real initial data with vanishing mean is $C^\infty$ (cf. \cite{Molinet2008}), but this seems to be very special.} The approximation rate depends on the distribution of the Sobolev energy on the frequencies.\\
We start with Lipschitz continuity in the $L^2$-topology provided that the initial data are in $H^s$ for $s>1$. For this purpose consider the following estimates derived in the previous sections for $s > 1$:
\begin{equation*}
\left\{\begin{array}{cl}
\Vert u \Vert_{F^0(T)} &\lesssim \Vert u \Vert_{E^0(T)} + \Vert \partial_x (v (u_1+u_2)) \Vert_{N^0(T)} \\
\Vert \partial_x (v (u_1+u_2)) \Vert_{N^0(T)} &\lesssim \Vert v \Vert_{F^0(T)} ( \Vert u_1 \Vert_{F^s(T)} + \Vert u_2 \Vert_{F^s(T)}) \\
\Vert v \Vert^2_{E^0(T)} &\lesssim \Vert v(0) \Vert_{L^2}^2 + \Vert v \Vert^2_{F^0(T)} ( \Vert u_1 \Vert_{F^s(T)} + \Vert u_2 \Vert_{F^s(T)}) \end{array} \right.
\end{equation*}
Provided that $\Vert u_1 \Vert_{F^s(1)}$, $\Vert u_2(0) \Vert_{F^s(1)}$ are sufficiently small which is the case for small initial data $\Vert u_1(0) \Vert_{H^s}$, $\Vert u_2(0) \Vert_{H^s}$ according  to the above analysis we find from putting the estimates together
\begin{equation*}
\Vert v \Vert^2_{F^0(1)} \lesssim \Vert v(0) \Vert_{L^2}^2 + \Vert v \Vert^2_{F^0(1)} ( \Vert u_1 \Vert^2_{F^s(1)} + \Vert u_2 \Vert^2_{F^s(1)}) + \Vert v \Vert^2_{F^0(1)} (\Vert u_1 \Vert_{F^s(1)} + \Vert u_2 \Vert_{F^s(1)})
\end{equation*}
and for sufficiently small initial data this implies
\begin{equation*}
\Vert v \Vert_{F^0(1)} \lesssim_{\Vert u_i(0) \Vert_{H^s}} \Vert v(0) \Vert_{L^2}
\end{equation*}
Moreover, for differences of solutions to the Benjamin-Ono equation we find at $H^s$-regularity
\begin{equation*}
\left\{\begin{array}{cl}
\Vert v \Vert_{F^s(T)} &\lesssim \Vert v \Vert_{E^s(T)} + \Vert \partial_x (v(u_1+u_2)) \Vert_{N^s(T)} \\
\Vert \partial_x ( v(u_1+u_2)) \Vert_{N^s(T)} &\lesssim \Vert v \Vert_{F^s(T)} (\Vert u_1 \Vert_{F^s(T)} + \Vert u_2 \Vert_{F^s(T)}) \\
\Vert v \Vert^2_{E^s(T)} &\lesssim \Vert v(0) \Vert_{H^s}^2 + \Vert v \Vert^3_{F^s(T)} + \Vert v \Vert_{F^0(T)} \Vert v \Vert_{F^s(T)} \Vert u_2 \Vert_{F^{2s}(T)} \end{array} \right.
\end{equation*}
provided that $s>1$.\\
Let $S_T^\infty$ denote the mapping $H^\infty \rightarrow C([0,T],H^\infty)$ assigning smooth initial data to smooth solutions to the Benjamin-Ono equation.\\
Let $u_0$ be a smooth initial datum and consider $v = S_T^\infty(u_0) - S_T^\infty(P_{\leq N} u_0)$. Observe that
\begin{equation*}
\Vert v \Vert_{F^0(T)} \lesssim \Vert u_0 - P_{\leq N} u_0 \Vert_{L^2} \lesssim \Vert P_{> N} u_0 \Vert_{L^2} \lesssim N^{-s} \Vert P_{>N} u_0 \Vert_{H^s}
\end{equation*}
Moreover, $P_{> N} u_0$ is the initial datum to $v$. Consequently, $\Vert P_{> N} u_0 \Vert_{H^s} \lesssim \Vert v(0) \Vert_{H^s} \lesssim \Vert v \Vert_{F^s(T)}$. A variant of the proof of the a priori estimates for solutions yields the system of inequalities
\begin{equation*}
\left\{\begin{array}{cl}
\Vert u \Vert_{F^{2s}(T)} &\lesssim \Vert u \Vert_{E^{2s}(T)} + \Vert \partial_x (u^2) \Vert_{N^{2s}(T)} \\
\Vert u \partial_x u \Vert_{N^{2s}(T)} &\lesssim \Vert u \Vert_{F^{2s}(T)} \Vert u \Vert_{F^s(T)} \\
\Vert u \Vert^2_{E^{2s}(T)} &\lesssim \Vert u_0 \Vert_{H^{2s}}^2 + \Vert u \Vert_{F^{2s}(T)}^2 \Vert u \Vert_{F^s(T)}  \end{array} \right.
\end{equation*}
which yields
\begin{equation*}
\Vert u \Vert^2_{F^{2s}(T)} \lesssim \Vert u(0) \Vert_{H^{2s}}^2 + \Vert u \Vert_{F^{2s}(T)}^2 \Vert u \Vert_{F^s(T)} + \Vert u \Vert^2_{F^{2s}(T)} \Vert u \Vert_{F^s(T)}
\end{equation*}
This shows $\Vert u_2 \Vert_{F^{2s}(T)} \lesssim \Vert u_2(0) \Vert_{H^{2s}}$ provided that $\Vert u_2 \Vert_{F^s(T)}$ is sufficiently small. This implies 
\begin{equation*}
\Vert u \Vert_{F^{2s}(T)} \lesssim \Vert P_{\leq N} u \Vert_{H^{2s}} \lesssim N^s \Vert u \Vert_{H^s}
\end{equation*}
For the solution to the difference equation we derive the inequality
\begin{equation*}
\Vert v \Vert^2_{E^s(T)} \lesssim \Vert v(0) \Vert_{H^s}^2 + \Vert v \Vert^3_{F^s(T)} + \Vert v \Vert^2_{F^s(T)} \Vert u \Vert_{H^s}
\end{equation*}
This allows us to conclude a priori estimates for $v=S_T^\infty (u_0) - S_T^\infty(P_{\leq N} u_0)$.\\
Next, we consider a sequence of smooth initial data $(u_n) \subseteq H^\infty$ converging to $u_0 \in H^s, \; s > 1$.\\
We write
\begin{equation*}
\begin{split}
S_T^\infty(u_n) - S_T^\infty(u_m) &= (S_T^\infty(u_n) - S_T^\infty(P_{\leq N} u_n)) - (S_T^\infty(u_m) - S_T^\infty(P_{\leq N} u_m)) \\
 &+ (S_T^\infty(P_{\leq N} u_n) - S_T^\infty(P_{\leq N} u_m))
\end{split}
\end{equation*}
and by the above considerations
\begin{equation*}
\begin{split}
\Vert S_T^\infty(u_n) - S_T^\infty(u_m) \Vert_{C([0,T],H^s)} &\lesssim \Vert P_{\geq N} u_n \Vert_{H^s} + \Vert P_{\geq N} u_m \Vert_{H^s} \\
&+ \Vert S_T^\infty(P_{\leq N} u_n) - S_T^\infty(P_{\leq N} u_m) \Vert_{C([0,T],H^s)}
\end{split}
\end{equation*}
For the third term note that
\begin{equation*}
\begin{split}
\Vert S_T^\infty(P_{\leq N} u_n) - S_T^\infty(P_{\leq N} u_m) \Vert_{C([0,T],H^s)} &\leq \Vert S_T^\infty(P_{\leq N} u_n) - S_T^\infty(P_{\leq N} u_m) \Vert_{C([0,T],H^3)} \\
&\leq f(\Vert P_{\leq N} u_n - P_{\leq N} u_m \Vert_{H^3})
\end{split}
\end{equation*}
With $f(x) \rightarrow 0$ as $x \rightarrow 0$ due to continuous dependence in $H^3$. Since $\Vert P_{\leq N} u_n - P_{\leq N} u_m \Vert_{H^3} \lesssim N^{3-s} \Vert P_{\leq N}(u_n - u_m) \Vert_{H^s}$ we find that 
\begin{equation*}
\Vert S_T^\infty(P_{\leq N} u_n) - S_T^\infty(P_{\leq N} u_m) \Vert_{C([0,T],H^s)} \rightarrow 0 \text{ as } n,m \rightarrow \infty
\end{equation*}
 for any $N$. Choosing $N$ so that $\Vert P_{\geq N} u_n \Vert_{H^s} + \Vert P_{\geq N} u_m \Vert_{H^s} \leq \varepsilon/2$ for any $n,m$ which is possible due to convergence to $u$ we can complete the proof of Theorem \ref{thm:wellPosednessBenjaminOnoEquation} for $k=2$. When dealing with large initial data we rescale the initial value $u_0 \rightarrow \lambda u_0(\cdot / \lambda)$ to consider the Benjamin-Ono equation with small initial data on the rescaled torus $\lambda \mathbb{T}$. Following Remark \ref{remark:EuclideanTransversalityExtension} the decisive bilinear Strichartz estimate is scaling invariant, which allows us to rerun the above proof for small initial data on the rescaled torus. But note that this argument does not adapt in a simple manner to the case of higher order nonlinearities because of criticality or supercriticality of the $L^2$-norm.
\section{Extending the method to higher order nonlinearities}
\label{section:HigherOrderGeneralizations}
Finally, we indicate how the method can also deal with higher order nonlinearities. For definiteness consider the equation
\begin{equation}
\label{eq:kGeneralizedBenjaminOnoEquation}
\partial_t u + \mathcal{H} \partial_{xx} u = u^{k-1} \partial_x u, \quad u: \mathbb{R} \times \mathbb{T} \rightarrow \mathbb{R}
\end{equation}
For solutions we prove the following set of estimates:
\begin{equation}
\label{eq:GeneralizedBenjaminOnoEquationEstimates}
\left\{ \begin{array}{cl}
\Vert u \Vert_{F^s_{BO}(T)} &\lesssim \Vert u \Vert_{E^s_{BO}(T)} + \Vert \partial_x (u^k) \Vert_{N_{BO}^s(T)} \\
\Vert \partial_x (u^k) \Vert_{N^s(T)} &\lesssim \Vert u \Vert^k_{F_{BO}^s(T)} \\
\Vert u \Vert^2_{E^s(T)} &\lesssim \Vert u_0 \Vert^2_{H_{\mathbb{R}}^s} + \Vert u \Vert^{k+1}_{F^s(T)}
\end{array} \right.
\end{equation}
For the nonlinear interaction we prove the following estimates for $s>1$:
\begin{align}
\label{eq:NonlinearEstimateIGeneralizedBOEquation}
\Vert \partial_x (u_1 \ldots u_k) \Vert_{N^s_{BO}(T)} &\lesssim \prod_{l=1}^k \Vert u_k \Vert_{F^s_{BO}(T)} \\
\label{eq:NonlinearEstimateIIGeneralizedBOEquation}
\Vert \partial_x (u_1 \ldots u_k) \Vert_{N^0_{BO}(T)} &\lesssim \Vert u_1 \Vert_{F^0_{BO}(T)} \prod_{l=2}^{k} \Vert u_l \Vert_{F^s_{BO}(T)}
\end{align}
In case of $High \times \ldots \rightarrow High$-interaction the following crude estimate suffices:
\begin{equation}
\begin{split}
\Vert P_N (\partial_x (P_{N_1} u_1 \ldots P_{N_k} u_{k}) \Vert_{L_t^1 L_x^2} &\lesssim N^{1/2} \Vert P_{N_1} u_1 \ldots P_{N_k} u_k \Vert_{L_{t,x}^2} \\
&\lesssim \Vert P_{N_1} u_1 \Vert_{L_t^\infty L_x^2} \prod_{l=2}^k \Vert P_{N_l} u_l \Vert_{L_{t,x}^\infty} \\
&\lesssim \Vert P_{N_1} u_1 \Vert_{F^0_{BO}} \prod_{l=2}^k N_l^{1/2} \Vert P_{N_l} u_l \Vert_{F^0_{BO}}
\end{split}
\end{equation}
For $High \times High \times \ldots \rightarrow Low$-interaction observe
\begin{equation}
\begin{split}
&\Vert P_N (\partial_x (P_{N_1} u_1 P_{N_2} u_2 \ldots P_{N_k} u_k)) \Vert_{DU^2(I)} \\
&\lesssim N \frac{N_1}{N} \sup_{\Vert v \Vert_{V_0^2} = 1} \int_{\substack{I^\prime,\\ |I^\prime| = N_1^{-1}}} \int P_N v P_{N_1} u_1 P_{N_2} u_2 \ldots P_{N_k} u_k dx dt\\
&\lesssim N_1 \sup_{\Vert v \Vert_{V_0^2}=1} \Vert P_N v P_{N_1} u_1 \Vert_{L^2_{t,x}} \Vert P_{N_2} u_2 \Vert_{L_{t,x}^2} \prod_{l=3}^{k+1} \Vert P_{N_l} u_l \Vert_{L_{t,x}^\infty} \\
&\lesssim \log \langle \frac{N_1}{N} \rangle \Vert P_{N_1} u_1 \Vert_{F_{BO}} \Vert P_{N_2} u_2 \Vert_{F_{BO}} \prod_{l=3}^{k+1} N_l^{1/2} \Vert P_{N_l} u_l \Vert_{F_{BO}}
\end{split}
\end{equation}
which is again enough to conclude the estimates \eqref{eq:NonlinearEstimateIGeneralizedBOEquation}, \eqref{eq:NonlinearEstimateIIGeneralizedBOEquation}.\\
We turn to the modification of the energy estimate for solutions: After Littlewood-Paley decomposition, localization in time reciprocal to the highest frequencies and possibly integration by parts we have to estimate expressions of the kind
\begin{equation}
\label{eq:generalizedEnergyTransfer}
\int_{I,|I|=N_1^{-1}} \int P_{N_1} u P_{N_2} u P_{N_3} u \ldots P_{N_{k+1}} u dx dt,
\end{equation}
where $N_1 \sim N_2 \geq N_3 \geq \ldots \geq N_{k+1}$.\\
Suppose there are less than four frequencies, which are much larger than the rest. When this involves the input frequency, we integrate by parts to place the derivative on the $N_3$ frequency.\\
Next, we deploy two bilinear Strichartz estimates involving the largest to fourth to largest frequencies, which compensate for the factor $N_1$ from the time localization and use pointwise bounds for the remaining frequencies to find
\begin{equation}
\begin{split}
\eqref{eq:generalizedEnergyTransfer} &\lesssim \Vert P_{N_1} u P_{N_2} u \Vert_{L^2_{t,x}} \Vert P_{N_3} u P_{N_4} u \Vert_{L^2_{t,x}} \prod_{l=3}^{k+1} \Vert P_{N_l} u \Vert_{L^\infty_{t,x}} \\
&\lesssim N_1^{-1} \prod_{l=1}^4 \Vert P_{N_l} u \Vert_{F_{BO}} \prod_{l=5}^{k+1} \Vert P_{N_l} u \Vert_{L_{t,x}^\infty}
\end{split}
\end{equation}
and further
\begin{equation*}
\begin{split}
&\int_0^t ds \int dx P_{N_1} u \partial_x (P_{N_2} u P_{N_3} u P_{N_4} u \ldots P_{N_{k+1}} u) \\
&\lesssim N_3 \prod_{i=1}^4 \Vert P_{N_i} u \Vert_{F^0_{BO}} \prod_{l=5}^{k+1} N_l^{1/2} \Vert P_{N_l} u \Vert_{F^0_{BO}}
\end{split}
\end{equation*}
In case the input frequency is not involved the estimate can be deduced by the same means without integration by parts.\\
Next, suppose that $N_1 \sim N_2 \sim N_3 \sim N_4$. In this case we can use four $L^4_{t,x}$-Strichartz estimates on the high frequencies which gains a factor $N^{-1/2}$ due to H\"older in time and $L^{\infty}_{t,x}$-estimates on the remaining frequencies. Since we have two high frequencies to spare, this easily compensates for $N^{3/2}$.\\
We turn to differences of solutions $v=u_1-u_2$. Note the algebraic identities
\begin{equation}
\label{eq:factorizationDifferenceNonlinearities}
\partial_x (u_1^k - u_2^k) = \partial_x (v (P_k(u_1,u_2)) = \partial_x (v Q_k(v,u_2))
\end{equation}
First, we turn to the following set of estimates:
\begin{equation}
\label{eq:DifferencesSolutionsBenjaminOnoEquation}
\left\{ \begin{array}{cl}
\Vert v \Vert_{F^0_{BO}(T)} &\lesssim \Vert v \Vert_{E^0_{BO}(T)} + \Vert \partial_x (v P_k(u_1,u_2)) \Vert_{N_{BO}^0(T)} \\
\Vert \partial_x (v P_k(u_1,u_2) ) \Vert_{N^0(T)} &\lesssim \Vert v \Vert_{F^0_{BO}(T)} P_k( \Vert u_1 \Vert_{F_{BO}^s(T)}, \Vert u_2 \Vert_{F^s_{BO}(T)}) \\
\Vert u \Vert^2_{E^0(T)} &\lesssim \Vert v(0) \Vert^2_{L^2} + \Vert v \Vert_{F^0}^2 R_k ( \Vert u_1 \Vert_{F^s(T)}, \Vert u_2 \Vert_{F^s(T)})
\end{array} \right.
\end{equation}
for $s>1$, $T \in (0,1]$.\\
The nonlinear estimate is settled with \eqref{eq:NonlinearEstimateIIGeneralizedBOEquation}. For the energy estimate we in addition to above have to consider the interaction where we can no longer integrate by parts due to the reduced symmetry of the difference equation:
\begin{equation}
N_1^2 \int_{|I|=N_1^{-1}} \int P_{N_1} v P_{N_2} u_{i_2} P_{N_3} v P_{N_4} u_{i_4} \ldots dx dt,
\end{equation}
where $N_1 \sim N_2 \gg N_3 \geq N_4 \geq \ldots \geq N_{k+1}$.\\
We find
\begin{equation*}
\begin{split}
&\lesssim N_1^2 \Vert P_{N_1} v P_{N_3} v \Vert_{L^2_{t,x}} \Vert P_{N_2} u_{i_2} P_{N_4} u_{i_4} \Vert_{L^2_{t,x}} \prod_{l=5}^{k+1} \Vert P_{N_l} u_{i_l} \Vert_{L^\infty_{t,x}} \\
&\lesssim \Vert P_{N_1} v \Vert_{F_{BO}} \Vert P_{N_3} v \Vert_{F_{BO}} N_2 \Vert P_{N_2} u_{i_2} \Vert_{F_{BO}} \Vert P_{N_4} u_{i_4} \Vert_{F_{BO}} \prod_{l=5}^{k+1} N_l^{1/2} \Vert P_{N_l} u_{i_l} \Vert_{F_{BO}}
\end{split}
\end{equation*}
and the summation to conclude the energy estimate from \eqref{eq:DifferencesSolutionsBenjaminOnoEquation} is straight-forward.\\
Finally, we turn to the following set of estimates
\begin{equation*}
\left\{ \begin{array}{cl}
\Vert v \Vert_{F^s_{BO}(T)} &\lesssim \Vert v \Vert_{E^s_{BO}(T)} + \Vert \partial_x (v (P_k(u_1,u_2) )) \Vert_{N_{BO}^s(T)} \\
\Vert \partial_x (v (P_k(u_1,u_2) )) \Vert_{N^s(T)} &\lesssim \Vert v \Vert_{F_{BO}^s(T)} P_k(\Vert u_1 \Vert_{F^s_{BO}}, \Vert u_2 \Vert_{F^s_{BO}}) \\
\Vert v \Vert^2_{E^s(T)} &\lesssim \Vert v(0) \Vert^2_{H_{\mathbb{R}}^s} + \Vert v \Vert_{F^s_{BO}} R_k (\Vert v \Vert_{F^s_{BO}}, \Vert u_2 \Vert_{F^s_{BO}}) \\
&+ \Vert v \Vert_{F^0_{BO}} \Vert v \Vert_{F^s_{BO}} \Vert u_2 \Vert_{F^{2s}} L_k(\Vert v \Vert_{F^s_{BO}}, \Vert u_2 \Vert_{F^s_{BO}})
\end{array} \right.
\end{equation*}
The last term is new and comes from the estimate of
\begin{equation}
N_1 \int_{|I|=N_1^{-1}} \int P_{N_1} v P_{N_3} v P_{N_2} u_2 \prod_{l=4}^{l+1} P_{N_l} w_{i_l} dx dt,
\end{equation}
where $N_1 \sim N_2 \gg N_3 \geq \ldots \geq N_{k+1}$.\\
In this case we estimate $P_{N_2} u_2$ in $F^{2s}_{BO}$ which allows us to conclude the estimates. With the necessary estimates at our disposal we can prove local well-posedness for generalized Benjamin-Ono equations \eqref{eq:kGeneralizedBenjaminOnoEquation} like in Section \ref{section:Conclusion}.
\section*{Acknowledgement}
I would like to thank my thesis advisor Professor Sebastian Herr for helpful comments improving the presentation.
% \bib, bibdiv, biblist are defined by the amsrefs package.

\bibliographystyle{amsxport}
\end{document}